\documentclass[12pt,psfig]{article}
\usepackage{multirow}
\usepackage{longtable}
\usepackage{bibentry}
\usepackage{amsfonts}
\usepackage{amssymb}
\usepackage{amsthm}
\usepackage[namelimits]{amsmath}
\usepackage{graphicx,epsfig}
\usepackage{amscd}
\usepackage{epsfig,color}
\usepackage{tikz}
\usepackage{pgflibraryarrows}
\usepackage{pgflibrarysnakes}

\usepackage{subfig}

\usepackage{tikz}
\usetikzlibrary{positioning}

\setcounter{secnumdepth}{5} \setcounter{tocdepth}{5}
\topmargin=-0.5in
\date{}

\oddsidemargin=0.00in \textheight=8.75in \textwidth=6.5in
\parindent=0.3in

\newcommand{\de}{\delta}

\newcommand{\ep}{\epsilon}
\newcommand{\Ga}{\Gamma}
\newcommand{\ga}{\gamma}

\newcommand{\al}{\alpha}
\newcommand{\be}{\beta}

\newcommand{\si}{\sigma}
\newcommand{\ld}{\lambda}
\newcommand{\Ld}{\Lambda}
\newcommand{\ba}{\begin{array}}
\newcommand{\ea}{\end{array}}
\newcommand{\beqq}{\begin{equation*}}
\newcommand{\eeqq}{\end{equation*}}
\newcommand{\beq}{\begin{equation}}
\newcommand{\eeq}{\end{equation}}
\newcommand{\bdm}{\begin{displaymath}}
\newcommand{\edm}{\end{displaymath}}

\theoremstyle{definition}
\newtheorem{theorem}{Theorem}[section]
\newtheorem{definition}{Definition}[section]
\newtheorem{lemma}{Lemma}[section]

\newtheorem{remark}{Remark}[section]

\numberwithin{equation}{section}
\begin{document}
\pagestyle{plain}

\begin{center}
{\Large \bf {A geometric criterion for the existence of chaos based on periodic orbits in continuous-time autonomous systems}}
\\ [0.3in]
XU ZHANG $^a$\ \footnote{Email addresses:\ \ xuzhang08@gmail.com (X. Zhang), eegchen@cityu.edu.hk (G. Chen).\\
2010 AMS subject classifications\ 37C10, 37D45.},\quad
GUANRONG CHEN $^{b}$

\vspace{0.15in}
$^a${\it  Department of Mathematics, Shandong University \\[-0.5ex]
Weihai 264209, Shandong, China}

\vspace{0.15in} $^b${\it Department of Electronic Engineering\\[-0.5ex]
City University of Hong Kong, Hong Kong SAR, China}
\end{center}

\vspace{0.18in}

\baselineskip=20pt

{\bf\large{Abstract.}}\
A new geometric criterion is derived for the existence of chaos in continuous-time autonomous systems in three-dimensional Euclidean spaces, where a type of Smale horseshoe in a subshift of finite type exists, but the intersection of stable and unstable manifolds of two points on a hyperbolic periodic orbit does not imply the existence of a Smale horseshoe of the same type on cross-sections of these two points. This criterion is based on the existence of a hyperbolic periodic orbit, differing from the classical equilibrium-based Shilnikov criterion and the condition of transversal homoclinic or heteroclinic orbits of Poincar\'{e} maps.

{\sl\bf{Keywords:}} Chaos, hyperbolic periodic orbit, Smale horseshoe, stable/unstable manifolds, subshift of finite type.

\bigskip

\baselineskip=20pt
\section{Introduction}

Consider a continuous-time autonomous system  described by an ordinary differential equation $\dot{x}=\Phi(x),\ x\in\mathbb{R}^3$, where $\Phi:U\to\mathbb{R}^3$ is $C^r$ on some open set $U\subset\mathbb{R}^3$. An equilibrium point $q$ is the state satisfying $\Phi(q) = 0$, that is,  $x = q$ is a solution for all $t$. If the eigenvalues of the Jacobian matrix of the system at the equilibrium have non-zero real parts, namely there is no center manifold, then the equilibrium is called hyperbolic, which can be classified as node, saddle, node-focus, and saddle-focus. Typical hyperbolic chaotic systems include the Lorenz system \cite{Lorenz} and the Chen system \cite{Chen1999} which, with the typical parameter values, have two saddle-foci and one unstable node. Another typical example is the generalized Lorenz system with multi-stability, where two stable equilibria could exist \cite{LeoKuz2015}. For hyperbolic equilibria, there are many well-known criteria on the existence of chaos. In the study of continuous-time autonomous systems, they could be simplified so as to study suitably-defined Poincar\'{e} maps on cross-sections. Since a Poincar\'{e} map represents a discrete dynamical system, many powerful tools could be utilized, such as the Smale horseshoe \cite{Smale1963}, the Smale-Birkhoff Theorem (the existence of a transversal homoclinic orbit) \cite{Robinson}, and the existence of transversal heteroclinic orbits \cite{Bertozzi1988}, to show the existence of chaos in the system. Besides, the Shilnikov criterion \cite{Shilnikov1965, Shilnikov1967, Shilnikov1970,Wiggins2003} and the Melnikov method \cite{Melnikov1963} are useful tools for proving the existence of chaos in a continuous-time autonomous system.

On the other hand, there are some autonomous systems without hyperbolic equilibrium points in three-dimensional spaces, but these systems have chaotic attractors discovered by numerical experiments.
In climate systems, ecosystems, financial markets, engineering applications, mechanical and electromechanical systems, there often exist more than one attractor, which is referred to as multi-stability.  The multi-stability is a typical property of systems without hyperbolic equilibrium points \cite{DudkJafaKapKuznLeonPras2016}. For example,  a mechanical system, discovered by Sommerfeld \cite{Ecke2013, Somm1902}, has oscillations caused by a motor driving an unbalanced weight and resonance capture (Sommerfeld effect), which captures the failure of the rotating system due to the resonant interactions. Other examples include a double-mass mathematical model of the drilling system studied in \cite{MihaVeggWouwNijm2004} and the Rabinovic system describing the interactions of three resonantly coupled plasma waves \cite{PikoRabiTrak1978, Rabi1978}.

There are some other interesting mathematical models without hyperbolic equilibrium points: a chaotic
Chua's circuit \cite{LeoKuz2013},  some rare flows with chaotic attractors but  no equilibrium \cite{Sprott1994},  a chaotic autonomous system with a line of equilibria \cite{MolaJafaSproGolp2013}, a chaotic system with a surface of equilibria \cite{JafariSprottPhamLi2016},  a chaotic system with one and only one stable equilibrium \cite{WangChen2013}, and some others \cite{JafariSprottNazarimehr2015}.  For these  systems with chaotic attractors, their equilibria might be stable, or may not even exist, therefore many classical tools such as the Shilnikov criterion are not applicable to describe their chaotic dynamics. The classical Smale-Birkhoff Theorem, or the existence of a transversal heteroclinic orbit, requires the strong assumption of transversal dynamics, which is difficult to verify in real applications (see Subsection \ref{transsec-1} below for more detailed discussions).

An interesting problem is the mechanism for the existence of chaotic attractors in continuous-time autonomous systems without hyperbolic equilibrium points. In this paper, a geometric criterion is derived to describe the existence of chaos in such systems, revealing the chaos forming mechanism. Specifically, some chaotic dynamics are shown to have  a Smale horseshoe in a subshift of finite type, and the classical intersection mechanism of stable and unstable manifolds of two points on a hyperbolic periodic orbit does not imply the existence of a Smale horseshoe of the same type on cross-sections of these two points (see Remark \ref{trans-asump} and Theorem \ref{mainresult-1} in Section \ref{chaotichidden}).

The rest of the paper is organized as follows. In Section \ref{basic-1}, some basic concepts and useful preliminaries are introduced. In Section \ref{chaotichidden}, the complex dynamics of autonomous systems without hyperbolic equilibrium points are studied. This section is divided into three parts. In the first subsection, the classical transversal homoclinic or heteroclinic orbits are applied to explain the complex dynamics. In the second subsection, a topological model is established for the Smale horseshoe in a subshift of finite type with a particular transition matrix. In the third subsection, a geometric criterion is derived, where some new dynamics are observed with a Smale horseshoe in a subshift of finite type.

\section{Basic Concepts and Preliminaries}\label{basic-1}

First, recall the symbolic dynamics \cite{Robinson}.

Let $m\geq2$ be an integer, $S_0=\{1,2,...,m\}$, and
$$
\textstyle\sum_m:=\{\al=(...,a_{-2},a_{-1},a_0,a_1,a_2,...):\ a_i\in S_0,\ \ i\in\mathbb{Z}\}
$$
be the two-sided sequence space. For any $\al=(...,a_{-1},a_0,a_1,...)$ and $\beta= (...,b_{-1},b_0,b_1,...)\in\sum_m$, the distance between them is
\beqq
d(\al,\beta)=\sum^{\infty}_{i=-\infty}\frac{d(a_i,b_i)}{2^{|i|}},\ d(a_i,b_i)
=\left\{
\begin{array}{ll}
1,& \hbox{if}\  a_i\neq b_i\\
0,& \hbox{if}\ a_i=b_i,
\end{array}
\right.\ i\in\mathbb{Z}.
\eeqq
The shift map $\sigma:\sum_m\to\sum_m$ is defined by $\sigma(\al)=(...,b_{-2},b_{-1},b_0,b_1,b_2...)$, where $\al=(...,a_{-2},a_{-1},a_0,a_1,a_2...)\in\sum_m$ and $b_{i}=a_{i+1}$, $i\in\mathbb{Z}$. The system $\left(\sum_m,\sigma\right)$ is called a two-sided symbolic dynamical system on $m$ symbols, or simply two-sided fullshift on $m$ symbols. A matrix $A=(a_{ij})_{m\times m}$ is called a transition matrix if $a_{ij}=0$ or $1$ for all $2\le i,j\le m$.
For a transition matrix $A$, define
\beqq
\textstyle\sum_{m}(A):=\left\{\be=(...,b_{-2},b_{-1},b_0,b_1,...)\in\textstyle\sum_m:\ a_{b_ib_{i+1}}=1,\ i\in\mathbb{Z}\right\}.
\eeqq
The map $\sigma_A:=\sigma\Big|_{\sum_m(A)}:\sum_m(A)\to\sum_m(A)$ is called the two-sided subshift of finite type with matrix $A$.

\begin{lemma}\cite[Lemma 3.1]{ZhangChen2018}\label{sympositive1}
The topological entropy for the subshift map $\sigma_A:\textstyle\sum_4(A)\to \textstyle\sum_4(A) $ is $\log2>0$, where
\beq\label{shiftmatrix1}
A=\left(
  \begin{array}{cccc}
    0 & 0 & 1 &1\\
    0& 0  & 1 &1\\
    1 & 1& 0 &0\\
    1 & 1& 0& 0
  \end{array}
\right).
\eeq
\end{lemma}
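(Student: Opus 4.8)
The plan is to reduce the statement to the classical identification of the topological entropy of a subshift of finite type with the logarithm of the spectral radius of its transition matrix, and then to compute that spectral radius for the specific matrix $A$ in \eqref{shiftmatrix1}. Concretely, I would recall the standard fact (cf.\ \cite{Robinson}) that for a transition matrix $A$ the entropy of $\sigma_A$ is $h(\sigma_A)=\log\rho(A)$, where $\rho(A)$ is the spectral radius of $A$; equivalently $h(\sigma_A)=\lim_{n\to\infty}\frac{1}{n}\log N_n$, where $N_n$ is the number of admissible words of length $n$ and satisfies $N_n=\mathbf{1}^{\top}A^{\,n-1}\mathbf{1}=\sum_{i,j}(A^{\,n-1})_{ij}$ with $\mathbf{1}=(1,1,1,1)^{\top}$. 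Thus the whole statement comes down to showing $\rho(A)=2$.

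The key computation is then $\rho(A)=2$, which I would obtain in two mutually reinforcing ways. First, $A$ has the block form $\left(\begin{smallmatrix}0&B\\ B&0\end{smallmatrix}\right)$ with $B=\left(\begin{smallmatrix}1&1\\ 1&1\end{smallmatrix}\right)$; since $B=\left(\begin{smallmatrix}1&1\\ 1&1\end{smallmatrix}\right)$ has eigenvalues $2$ and $0$, the eigenvectors $(v,v)^{\top}$ and $(v,-v)^{\top}$ of $A$ show that its eigenvalues are exactly $2,-2,0,0$ (equivalently, the characteristic polynomial factors as $\det(\lambda^{2}I-B^{2})=(\lambda^{2}-2)^{2}-4=\lambda^{2}(\lambda^{2}-4)$), so $\rho(A)=2$. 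Second, and more directly useful for the counting, every row of $A$ sums to $2$, i.e.\ $A\mathbf{1}=2\mathbf{1}$; hence $A^{\,n-1}\mathbf{1}=2^{\,n-1}\mathbf{1}$ and $N_n=\mathbf{1}^{\top}A^{\,n-1}\mathbf{1}=4\cdot 2^{\,n-1}$, giving $\frac{1}{n}\log N_n\to\log 2$. Combinatorially this simply reflects the transition structure of \eqref{shiftmatrix1}: the symbols split into the blocks $\{1,2\}$ and $\{3,4\}$, transitions are permitted only between the two blocks, and from any symbol there are precisely two admissible successors, whence $4\cdot 2^{\,n-1}$ admissible words of length $n$.

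I expect no genuine obstacle here; the argument is a direct eigenvalue/word-counting computation. The one point deserving a line of care is confirming that the spectral radius is attained by $2$ rather than by $-2$: because $A$ is a nonnegative matrix with the strictly positive eigenvector $\mathbf{1}$ belonging to the eigenvalue $2$, Perron--Frobenius theory forces $\rho(A)=2$. (In fact $A$ is irreducible, since its transition graph is strongly connected, although it is not primitive --- it has period $2$, which is exactly why the eigenvalue $-2$ also occurs and why the explicit count $N_n=4\cdot2^{n-1}$ is the most transparent route.) Combining the entropy formula with $\rho(A)=2$ yields $h(\sigma_A)=\log\rho(A)=\log 2>0$, which is the assertion of the lemma.
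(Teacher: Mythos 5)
Your proposal is correct: the identity $h(\sigma_A)=\log\rho(A)$, the block computation giving eigenvalues $2,-2,0,0$, the row-sum/word-count $N_n=4\cdot 2^{n-1}$, and the Perron--Frobenius remark are all accurate and together constitute a complete proof. Note, however, that the paper itself contains no proof of this lemma --- it is quoted verbatim from \cite[Lemma 3.1]{ZhangChen2018} --- so there is no internal argument to compare against; your computation is exactly the standard verification (entropy of a subshift of finite type equals the logarithm of the spectral radius of its transition matrix, here $\rho(A)=2$) that the cited reference relies on, and it could serve as a self-contained replacement for the citation.
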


Next, recall the classical Smale horseshoe map.

Consider a square, denoted by $U$, which is a compact subset on a two-dimensional manifold. A horseshoe map $F$ is constructed as follows. The action of the map is defined geometrically by squishing the square along one direction, then stretching the result into a long strip along the perpendicular direction, and finally folding the strip into the shape of a horseshoe, where $F(U)\cap U\neq\emptyset$. This operation is repeated for infinitely many times. An invariant set is formed by $\Ld=\cap_{i\in\mathbb{Z}}F^{i}(U)$, and the dynamics on this invariant set are described by the two-sided fullshift on two symbols \cite{Robinson}.

Now, introduce the Smale horseshoe in a subshift of finite type \cite{Robinson}. For brevity, only a special case is discussed, which will be used in the sequel.

Consider two squares on a two-dimensional manifold, denoted by $U_1$ and $U_2$ respectively, with empty intersection. The horseshoe map $F$ defined on $U_1\cup U_2$ is obtained as follows. The action of the map is defined geometrically by squishing the two squares along the same direction, then stretching the results into two long strips along the perpendicular direction, and finally folding the two strips into the shape of two horseshoes.  $F(U_1)$ and $U_2$ contribute to a horseshoe, and $F(U_2)$ and $U_1$ to another horseshoe. Figure \ref{examplepolyfigure6} illustrates the Smale horseshoe in a subshift of finite type with the matrix $A$ defined in \eqref{shiftmatrix1}, where $U_1$ and $F(U_1)$ are represented by green colors, and $U_2$ and $F(U_2)$ by yellow colors. Note that $F$ is contracting along the horizontal direction and expanding along the vertical direction, where $F(U_1)$ and $U_2$ form a horseshoe, and $F(U_2)$ and $U_1$ form another horseshoe. The set defined by  $\Ld=\cap_{i\in\mathbb{Z}}F^{i}(U_1\cup U_2)$ is invariant under the map, and the dynamics on this invariant set are described by the two-sided subshift of finite type with matrix $A$.

\begin{figure}[h]
\begin{center}
\scalebox{0.8 }{ \includegraphics{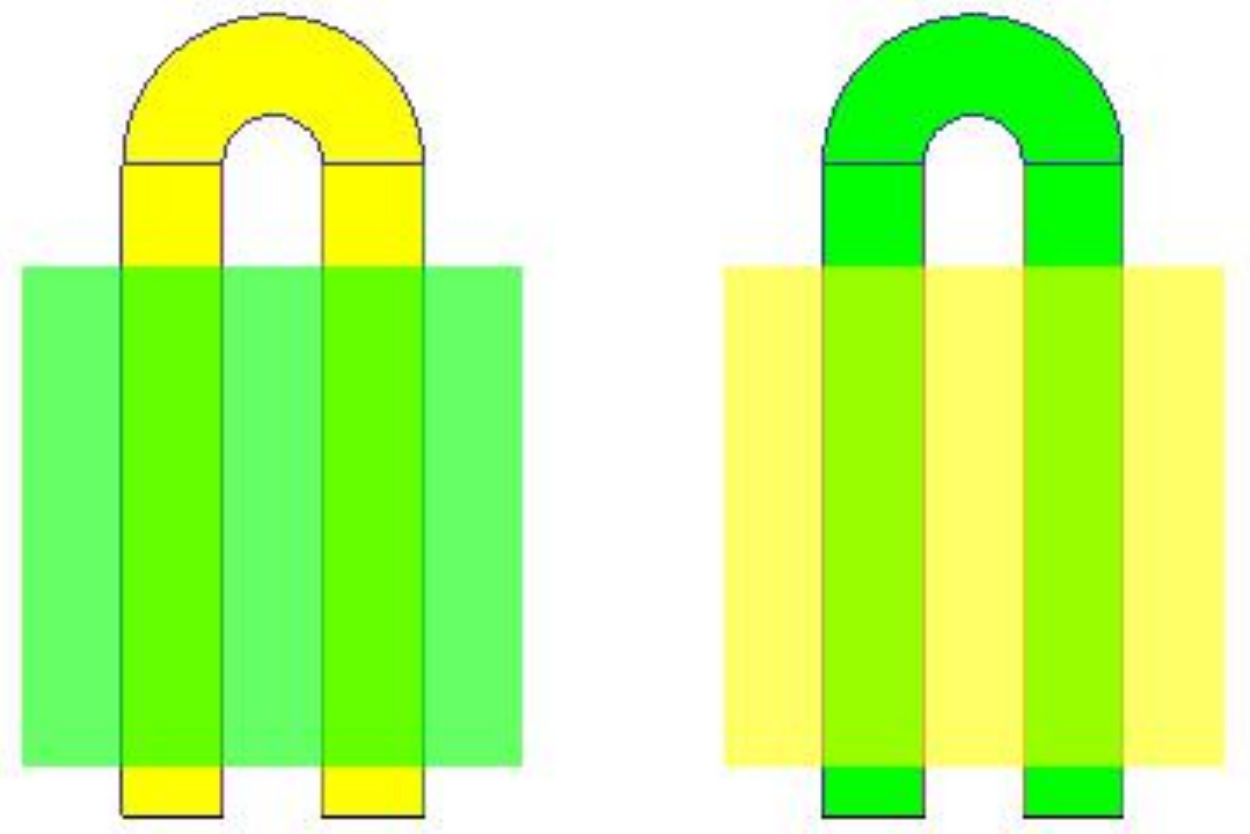}}
\renewcommand{\figure}{Fig.}
\caption{Illustration of the Smale horseshoe in a subshift of finite type with matrix $A$ (Figure 5 in \cite{ZhangChen2018}).
}\label{examplepolyfigure6}
\end{center}
\end{figure}

\section{Chaotic Dynamics of Autonomous Systems without Hyperbolic Equilibria}\label{chaotichidden}

In this section, the complex dynamics of autonomous systems without hyperbolic equilibria are investigated in three parts. For convenience, consider only systems in three-dimensional Euclidean spaces, but higher-dimensional cases and even differential equations defined on smooth manifolds can be similarly discussed.

Consider an ordinary differential equation, $\dot{x}=\Phi(x),\ x\in\mathbb{R}^3$, where $\Phi:U\to\mathbb{R}^3$ is $C^r$ on some open set $U\subset\mathbb{R}^3$. Let $\phi(t,\cdot)$ be a flow generated by this differential equation. For $x_0\in\mathbb{R}^3$, the flow $\phi(t,x_0)$ is the solution to the initial value problem $\dot{x}=\Phi(x)$ with $x(0)=x_0$. Suppose that this equation has a periodic solution of period $T>0$, denoted also by $\phi(t,x_0)$, where $x_0$ is now any point through which this periodic solution passes, namely $\phi(t+T,x_0)=\phi(t,x_0)$. Consider moreover a two-dimensional surface $\sum$ transversal to the vector field at $x_0$, where ``transversal" means that $\Phi(x)\cdot n(x)\neq 0$ with $n(x)$ being the normal to $\sum$ and ``$\cdot$" denoting the vector inner product. The surface $\sum$ is called a cross-section to the vector field.

It is noted that, if $\Phi(x)$ is $C^r$, then $\phi(t,x)$ is $C^r$ (Theorem 7.1.1 in \cite{Wiggins2003}). Thus, there is an open subset $V\subset\sum$ such that the orbits starting in $V$ will return to $\sum$ in a time close to $T$. The associate Poincar\'{e} map is the image of the points in $V$ with their first returns to $\sum$, namely,
\begin{align}\label{poincaremap1}
\nonumber P:&V\to\sum\\
 &x\to\phi(\tau(x),x).
\end{align}
It is clear that $\tau(x_0)=T$ and $P(x_0)=x_0$.

\subsection{Transversal homoclinic/heteroclinic orbits}\label{transsec-1}

In this subsection, the classical homoclinic or heteroclinic orbits are applied to explain the existence of complex dynamics in continuous-time autonomous systems with hidden attractors. Here, the assumption of the existence of transversal homoclinic or heteroclinic orbits is needed.

Consider a periodic orbit of the system and a cross-section of a Poincar\'{e} map containing two points $p$ and $q$ on this periodic orbit. Thus, these two points correspond to a periodic orbit with period two on the Poincar\'{e} map denoted by $P$. Furthermore, if there is a transversal homoclinic orbit corresponding to this periodic orbit, then the following Smale-Birkhoff Theorem could be applied to show the existence of chaos in the system.

\begin{theorem}\cite[Smale-Birkhoff Theorem]{Robinson}
Suppose that $q$ is a transversal homoclinic point corresponding to a hyperbolic periodic point $p$ of a diffeomorphism $f$. For each neighborhood $U$ of $\{p,q\}$, there is a positive integer $n$ such that $f^n$ has a hyperbolic invariant set $\Ld\subset U,$ with $p,q\in\Ld$, on which $f^n$ is topologically conjugate to the two-sided fullshift map on two symbols.
\end{theorem}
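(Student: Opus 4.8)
The plan is to reduce to the standard horseshoe construction near a transverse homoclinic orbit, using the inclination (lambda) lemma as the main engine. First I pass to a power: if $p$ has period $m$, I replace $f$ by $f^m$ so that $p$ becomes a hyperbolic fixed point. The integer $n$ produced at the end absorbs this factor, so it suffices to build the horseshoe for a power of the fixed-point map. Working in local coordinates near $p$ furnished by the stable/unstable manifold theorem, I split a neighborhood of $p$ as a product of a local stable disk $W^s_{loc}(p)$ and a local unstable disk $W^u_{loc}(p)$, on which $f$ contracts in the stable direction and expands in the unstable direction. Since $q$ is homoclinic, $f^k(q)\to p$ as $k\to+\infty$ and $f^{-k}(q)\to p$ as $k\to+\infty$, so some forward image $q_+=f^{k_+}(q)$ lies in $W^s_{loc}(p)$ and some backward image $q_-=f^{-k_-}(q)$ lies in $W^u_{loc}(p)$, both arbitrarily close to $p$.

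Next I bring in transversality. The hypothesis that $q$ is a transverse homoclinic point means $W^s(p)$ and $W^u(p)$ cross transversally along the homoclinic orbit; in particular a small unstable-dimensional disk $D$ lying in $W^u(p)$ near the homoclinic orbit meets $W^s_{loc}(p)$ transversally. The inclination lemma then asserts that the forward iterates $f^j(D)$ converge in the $C^1$ topology to $W^u_{loc}(p)$ on compact subsets as $j\to\infty$. Geometrically, after enough iterations a thin sliver of $D$ is stretched so as to run $C^1$-close to the unstable manifold of $p$ while being strongly contracted transversally, and this is the mechanism that lets a single high iterate fold a region back across itself.

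With these ingredients I construct the horseshoe. I choose a small ``box'' neighborhood $N\subset U$ containing $p$ together with the relevant piece of the homoclinic excursion, so that $p,q\in N$, modeled as a product of a stable and an unstable direction. The claim is that there is an integer $n$, assembled from $k_+$, $k_-$, and the number of inclination-lemma iterates, for which $g:=f^n$ maps $N$ over itself in the classic horseshoe pattern: $g(N)\cap N$ consists of two disjoint sub-boxes $V_1,V_2$, each stretching fully across $N$ in the unstable direction and contracted in the stable direction, with $g^{-1}(N)\cap N$ consisting of two complementary horizontal sub-boxes. One component arises from the orbit that stays near $p$ under the local hyperbolic dynamics, and the other from the homoclinic excursion that leaves a neighborhood of $p$, travels along the homoclinic orbit, and returns stretched across $N$ by the inclination lemma. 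I expect verifying the Conley--Moser conditions here --- the correct Markov alignment of $V_1,V_2$ across $N$ together with the cone and contraction/expansion estimates --- to be the main obstacle, since it requires quantitative control of $g=f^n$ that simultaneously combines the linearized behavior near $p$ with the bounded excursion near $q$.

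Finally I invoke symbolic dynamics. Once $g=f^n$ satisfies the horseshoe (Conley--Moser) conditions on $N$, the maximal invariant set $\Ld=\bigcap_{i\in\mathbb{Z}}g^{i}(N)$ is a hyperbolic Cantor set containing $p$ and $q$, and the itinerary map $h:\Ld\to\sum_2$ sending each point to the bi-infinite sequence recording which of $V_1,V_2$ its $g$-orbit visits is a homeomorphism conjugating $g|_{\Ld}$ to the two-sided fullshift on two symbols. The cone conditions yield the hyperbolicity of $\Ld$, while injectivity and surjectivity of $h$ follow from the contraction/expansion estimates exactly as in the classical horseshoe. This produces the required integer $n$, the hyperbolic invariant set $\Ld\subset U$ with $p,q\in\Ld$, and the conjugacy of $f^n|_{\Ld}$ to the full $2$-shift.
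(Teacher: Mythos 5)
First, a framing remark: the paper does not prove this statement at all --- it is quoted verbatim from Robinson's book as a classical result --- so the only meaningful baseline is the standard proof (Robinson's own construction). Your toolkit (passing to a power so that $p$ is a fixed point, adapted product coordinates from the stable/unstable manifold theorem, the inclination lemma, Conley--Moser conditions, and the itinerary map) is exactly the right one, but your geometric localization has a genuine gap that prevents the argument from proving the statement as written.

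The gap: you build the horseshoe inside a single product box $N\subset U$ ``containing $p$ together with the relevant piece of the homoclinic excursion, so that $p,q\in N$.'' The theorem, however, quantifies over \emph{every} neighborhood $U$ of the two-point set $\{p,q\}$; taking $U$ to be the union of two tiny disjoint balls around $p$ and $q$, no connected box $N\subset U$ can contain both points, let alone the homoclinic excursion joining them, which passes through points far from $\{p,q\}$. Your construction therefore faces an unresolvable dichotomy: if $N$ is a small box at $p$ (the usual single-box horseshoe, in which both strips --- the local one and the returning excursion strip --- lie near $p$), then $\Ld\subset N$ misses $q$ and the conclusion $q\in\Ld$ fails; if instead $N$ is large enough to contain $q$ and the excursion, then $\Ld\subset U$ fails. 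The standard resolution, and the reason the theorem is phrased with $U$ a neighborhood of $\{p,q\}$, is to use \emph{two disjoint small boxes} $B_p\ni p$ and $B_q\ni q$ inside $U$, with $B_q$ adapted to the transversal crossing of $W^u(p)$ and $W^s(p)$ at $q$, and to show that for large $n$ the map $g=f^n$ sends each box fully across both: $g(B_p)$ crosses $B_p$ by local hyperbolicity and crosses $B_q$ because $f^n(B_p)$ accumulates in the $C^1$ sense (inclination lemma) on $W^u(p)$, which passes through $q$; likewise $g(B_q)$ crosses both boxes, since a fixed forward iterate carries $B_q$ onto a disk transverse to $W^s_{loc}(p)$. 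The resulting transition matrix is full, $\Ld=\bigcap_{k\in\mathbb{Z}}g^{k}(B_p\cup B_q)\subset U$ is hyperbolic by the cone estimates, and $q\in\Ld$ because every iterate $f^{kn}(q)$ with $k\neq0$ lies in $B_p$ once $n$ is large. Your Conley--Moser verification and the symbolic-dynamics conjugacy then go through unchanged in this two-box setting.
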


Now, assume that there are $m$ periodic orbits in the autonomous system, and there is a cross-section for a Poincar\'{e} map containing one point from each of these $m$ periodic orbits. Clearly, these points are fixed points of the Poincar\'{e} map. If there are transversal heteroclinic orbits with these fixed points, then the following results on the transversal heteroclinic orbits could be applied.

\begin{theorem}\cite[Theorem 2.3.1]{Bertozzi1988}
If a diffeomorphism $f:\mathbb{R}^2\to\mathbb{R}^2$ possesses $m$ fixed points, $p_1,...,p_m$, which are non-degenerate hyperbolic saddle points, and if there exist points $q_i$ at which the unstable manifold $W^u(p_i)$ intersects the stable manifold $W^s(p_{i+1(\mbox{mod}m)})$ transversally for all $i$, then $f$ possesses an invariant set on which some iteration $f^k$ is topologically conjugate to the fullshift on $m$ symbols.
\end{theorem}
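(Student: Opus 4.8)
The plan is to realize the conclusion through a Conley--Moser horseshoe construction adapted to the heteroclinic cycle, with the inclination lemma (the $\lambda$-lemma) as the main analytic engine. First I would linearize $f$ near each saddle: since each $p_i$ is a non-degenerate hyperbolic saddle, on a small neighborhood there are adapted coordinates in which $f$ is $C^1$-close to its linear part, with a one-dimensional contracting (stable) direction and a one-dimensional expanding (unstable) direction. In these coordinates I fix a small rectangle $B_i$ centered at $p_i$, carrying the product structure of stable ``vertical'' and unstable ``horizontal'' fibers, and I record the two pieces of local dynamics along the cycle: the local passage that, under iteration near $p_i$, contracts $B_i$ in the stable direction and expands it in the unstable direction, and the global transition map $\psi_i$ that follows the orbit of $q_i$ from a neighborhood of $p_i$ to a neighborhood of $p_{i+1}$. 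Transversality of $W^u(p_i)$ and $W^s(p_{i+1})$ at $q_i$ says precisely that $\psi_i$ carries the unstable direction of $p_i$ across the stable direction of $p_{i+1}$.

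The second step is to propagate this transversality around and, crucially, across the whole cycle. Applying the inclination lemma near $p_{i+1}$ to a small arc of $W^u(p_i)$ meeting $W^s(p_{i+1})$ transversally at $q_i$, its forward iterates converge in the $C^1$ sense to the local unstable manifold $W^u(p_{i+1})$; pushing forward by $\psi_{i+1}$, these iterates then cross $W^s(p_{i+2})$ transversally near $q_{i+1}$. Iterating this argument around the cycle shows that forward iterates of $W^u(p_i)$ cross $W^s(p_j)$ transversally for \emph{every} ordered pair $(i,j)$ --- in particular the case $j=i$ produces a transverse homoclinic point for each $p_i$. This all-to-all intersection structure of the heteroclinic tangle, rather than the literal cyclic connections alone, is what supports a full shift (all transitions allowed) on $m$ symbols.

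Third, I would convert these crossings into a symbolic coding. Using the expansion and contraction estimates from the first step together with the alignments from the second, I choose the passage and transition times so that, for an appropriate iterate, the image of each box stretches completely across every box $B_j$ as a ``horizontal'' strip while its preimage sits in $B_i$ as a ``vertical'' strip; that is, the boxes and their images satisfy the Conley--Moser conditions (correctly aligned windows with uniform hyperbolicity). These conditions guarantee that the maximal invariant set $\Lambda=\bigcap_{n\in\mathbb{Z}}f^{kn}\!\left(\bigcup_{i=1}^m B_i\right)$ is a hyperbolic Cantor set and that the itinerary map sending a point to the bi-infinite sequence of indices of the boxes it visits is a homeomorphism conjugating $f^{k}|_{\Lambda}$ to the full shift $\sigma$ on $m$ symbols.

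The main obstacle is this third step, and within it the reconciliation of geometry with timing. The inclination lemma delivers the crossings only after a large, and \emph{a priori pair-dependent}, number of iterates spent near the intermediate saddles, so the naive first-return map has non-constant return time; the work is to organize the box sizes and the numbers of local iterations so that a single power $f^k$ realizes all $m^2$ correctly aligned crossings simultaneously, with contraction and expansion rates uniform enough to meet the Conley--Moser hyperbolicity bounds. Controlling the $C^1$ convergence in the inclination lemma uniformly over all pairs, and verifying that the resulting strips remain correctly aligned with no loss of transversality under the accumulated compositions, is the delicate part; once these estimates are in place, the conjugacy to the full shift on $m$ symbols follows from the standard Conley--Moser symbolic-dynamics machinery.
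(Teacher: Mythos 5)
This statement is quoted by the paper directly from \cite[Theorem 2.3.1]{Bertozzi1988} with no proof supplied, so the only meaningful comparison is with Bertozzi's original argument --- and your proposal follows essentially that same route: propagate the cyclic transversal intersections around the cycle with the $\lambda$-lemma to get transversal intersections $W^u(p_i)\cap W^s(p_j)$ for \emph{every} ordered pair $(i,j)$ (including homoclinic points), then run a Moser-style construction of correctly aligned horizontal/vertical strips in boxes near the saddles to conjugate an iterate $f^k$ on the maximal invariant set to the fullshift on $m$ symbols. Your identification of the all-to-all intersection structure (rather than the literal cyclic connections) as what supports a \emph{full} shift is exactly the right key point. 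The one step you flag but leave vague --- arranging a single power $f^k$ to realize all $m^2$ crossings simultaneously --- is closed by the standard padding mechanism: for a hyperbolic saddle the transit time through a linearizing box can be made equal to \emph{any} sufficiently large integer by selecting a correspondingly thin entry strip, so the set of realizable $(i,j)$-transition times contains all integers beyond some threshold, and one takes $k$ larger than the maximum of these thresholds; with that observation your outline is a complete and correct reconstruction of the cited proof.
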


\begin{remark}
The classical Shilnikov criterion \cite{Shilnikov1965,Shilnikov1967,Shilnikov1970, Wiggins2003} does not need to consider the Poincar\'{e} map, but it requires the existence of a saddle-focus fixed point for the continuous-time autonomous system, which means that this criterion works only for self-excited systems but not for systems with hidden attractors \cite{LeoKuz2013}.
\end{remark}

\subsection{A topological criterion for the existence of a Smale horseshoe in a subshift of finite type}

In this subsection, a topological criterion is established for the existence of Smale horseshoe in a subshift of finite type with matrix $A$, which is a transition matrix introduced in \eqref{shiftmatrix1}. A similar criterion could be derived for other transition matrices. This is a direct extension of the classical Conley-Moser condition \cite{Moser1973, Wiggins2003}.

\begin{definition}\cite[Definition 25.1.1]{Wiggins2003}
Consider a region $[a,b]\times [c,d]\subset\mathbb{R}^2$, where $b-a=1$ and $d-c=1$. A $\mu_v$-vertical curve is the graph of a function $v(y)$ that satisfies
$$
a\leq v(y)\leq b,\ |v(y_1)-v(y_2)|\leq \mu_{v}|y_1-y_2|\ \mbox{for}\ c\leq y_1,y_2\leq d.
$$
Similarly, a $\mu_{h}$-horizontal curve is the graph of a function $h(x)$ that satisfies
$$
c\leq h(x)\leq d,\ |h(x_1)-h(x_2)|\leq\mu_h|x_1-x_2|\ \mbox{for}\ a\leq x_1,x_2\leq b.
$$
\end{definition}

\begin{definition}\cite[Definition 25.1.2]{Wiggins2003}
Given two non-intersecting $\mu_v$-vertical curves, $v_1(y)<v_2(y)$ and $y\in[c,d]$, define a $\mu_v$-vertical strip by
$$
V=\{(x,y)\in[a,b]\times[c,d]\subset\mathbb{R}^2:\ x\in[v_1(y),v_2(y)],\ y\in[c,d]\}.
$$
Similarly, given two non-intersecting $\mu_h$-horizontal curves, $h_1(x)<h_2(x)$ and $x\in[a,b]$, define a $\mu_h$-horizontal strip by
$$
H=\{(x,y)\in[a,b]\times[c,d]\subset\mathbb{R}^2:\ y\in[h_1(x),h_2(x)],\ x\in[a,b]\}.
$$
The widths of the horizontal and vertical strips are defined respectively as
$$
d(H)=\max_{x\in[a,b]}|h_2(x)-h_1(x)|,
$$
$$
d(V)=\max_{y\in[c,d]}|v_2(y)-v_1(y)|.
$$
\end{definition}

\begin{lemma}\cite[Lemma 25.1.3]{Wiggins2003}
\begin{itemize}
\item[(i)] If $V^1\supset V^2\supset\cdots\supset V^k\supset\cdots$ is a nested sequence of $\mu_v$-vertical strips, with $d(V^k)\to 0$ as $k\to\infty$, then $V^{\infty}:=\cap^{\infty}_{k=1}V^k$ is a $\mu_v$-vertical curve.
\item[(ii)] If $H^1\supset H^2\supset\cdots\supset H^k\supset\cdots$ is a nested sequence of $\mu_h$-horizontal strips, with $d(H^k)\to 0$ as $k\to\infty$, then $H^{\infty}:=\cap^{\infty}_{k=1}H^k$ is a $\mu_h$-horizontal curve.
\end{itemize}
\end{lemma}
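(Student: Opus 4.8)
The plan is to prove part (i); part (ii) then follows by the symmetric argument interchanging the horizontal and vertical directions. First I would record each vertical strip in terms of its two bounding $\mu_v$-vertical curves, writing $V^k=\{(x,y): v_1^k(y)\le x\le v_2^k(y),\ y\in[c,d]\}$ with $v_1^k(y)<v_2^k(y)$, where each $v_i^k$ is a $\mu_v$-vertical curve. The nesting $V^{k+1}\subset V^k$ says that for each fixed $y\in[c,d]$ the horizontal slice $[v_1^{k+1}(y),v_2^{k+1}(y)]$ is contained in $[v_1^k(y),v_2^k(y)]$; hence $v_1^k(y)\le v_1^{k+1}(y)$ and $v_2^{k+1}(y)\le v_2^k(y)$ for every $y$. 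Thus, for each fixed $y$, the sequence $\{v_1^k(y)\}_k$ is nondecreasing and bounded above by $b$, while $\{v_2^k(y)\}_k$ is nonincreasing and bounded below by $a$.

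Since every bounded monotone sequence of reals converges, both sequences converge pointwise; call the limits $\underline{v}(y)$ and $\overline{v}(y)$, with $\underline{v}(y)\le\overline{v}(y)$. The width hypothesis $d(V^k)=\max_y|v_2^k(y)-v_1^k(y)|\to0$ forces $0\le\overline{v}(y)-\underline{v}(y)\le\lim_k d(V^k)=0$, so the two limits coincide in a single function $v^\infty(y):=\underline{v}(y)=\overline{v}(y)$. Moreover, since $0\le v^\infty(y)-v_1^k(y)\le v_2^k(y)-v_1^k(y)\le d(V^k)$ uniformly in $y$, the convergence $v_1^k\to v^\infty$ (and likewise $v_2^k\to v^\infty$) is uniform on $[c,d]$. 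A point $(x,y)$ lies in $V^\infty=\cap_k V^k$ if and only if $v_1^k(y)\le x\le v_2^k(y)$ for every $k$; letting $k\to\infty$ pins down $x=v^\infty(y)$, so $V^\infty$ is exactly the graph of $v^\infty$.

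It remains to verify that $v^\infty$ is itself a $\mu_v$-vertical curve. The range bound $a\le v^\infty(y)\le b$ follows by passing to the limit in $a\le v_1^k(y)\le v^\infty(y)\le v_2^k(y)\le b$. For the Lipschitz estimate, each $v_1^k$ satisfies $|v_1^k(y_1)-v_1^k(y_2)|\le\mu_v|y_1-y_2|$; since a non-strict inequality is preserved under pointwise limits, taking $k\to\infty$ gives $|v^\infty(y_1)-v^\infty(y_2)|\le\mu_v|y_1-y_2|$ for all $c\le y_1,y_2\le d$. Hence $V^\infty$ is a $\mu_v$-vertical curve, which proves (i).

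I expect the only real subtlety, rather than a genuine obstacle, to be the collapse of the nested slices to a single point: it is precisely the hypothesis $d(V^k)\to0$ that upgrades the intersection from a (possibly degenerate) strip to the graph of a single function, and without it one would at best obtain $V^\infty$ as a vertical strip bounded by $\underline{v}$ and $\overline{v}$. The preservation of both the range and the Lipschitz constant under the limit is routine, resting only on the stability of non-strict inequalities under limits.
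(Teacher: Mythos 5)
Your proof of part (i) is correct: the monotone convergence of the bounding curves, the squeeze by $d(V^k)\to 0$, the identification of $V^\infty$ with the graph of the limit, and the preservation of the bound $a\le v^\infty\le b$ and of the Lipschitz constant $\mu_v$ under pointwise limits are all sound, and part (ii) indeed follows by symmetry. Note that the paper itself gives no proof --- the lemma is quoted with a citation to \cite[Lemma 25.1.3]{Wiggins2003} --- and your argument is essentially the standard one from that source, which phrases the same squeeze as uniform (Cauchy) convergence of the boundary curves in the sup norm rather than via pointwise monotone limits.
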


\begin{lemma}\cite[Lemma 25.1.4]{Wiggins2003}
Suppose that $0\leq\mu_v\mu_h<1$. Then, a $\mu_v$-vertical curve and a $\mu_h$-horizontal curve intersect at a unique point.
\end{lemma}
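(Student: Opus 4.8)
The plan is to recast the intersection problem as a fixed-point problem and then apply the contraction mapping principle. A point $(x^*,y^*)$ lies on both curves precisely when it satisfies $x^*=v(y^*)$ and $y^*=h(x^*)$ simultaneously; eliminating $y^*$, this is equivalent to the single equation $x^*=v(h(x^*))$. So I would first record that intersection points of the $\mu_v$-vertical and $\mu_h$-horizontal curves are in bijective correspondence with fixed points of the composite map $g:=v\circ h$, with the second coordinate then recovered as $y^*=h(x^*)$.

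Next I would check that $g$ is a self-map of $[a,b]$ and is a contraction. Since the $\mu_h$-horizontal curve satisfies $c\leq h(x)\leq d$ for $x\in[a,b]$, the function $h$ maps $[a,b]$ into $[c,d]$; and since the $\mu_v$-vertical curve satisfies $a\leq v(y)\leq b$ for $y\in[c,d]$, the function $v$ maps $[c,d]$ into $[a,b]$. Hence $g=v\circ h$ is a well-defined map of $[a,b]$ into itself. Applying the two Lipschitz estimates in succession,
\[
|g(x_1)-g(x_2)|=|v(h(x_1))-v(h(x_2))|\leq\mu_v\,|h(x_1)-h(x_2)|\leq\mu_v\mu_h\,|x_1-x_2|,
\]
so $g$ is Lipschitz with constant $\mu_v\mu_h$. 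Because $0\leq\mu_v\mu_h<1$ by hypothesis, this constant is strictly less than $1$, so $g$ is a contraction on $[a,b]$.

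Finally, since $[a,b]$ is a complete metric space (a closed, bounded interval), the Banach fixed-point theorem gives a unique $x^*\in[a,b]$ with $g(x^*)=x^*$. Setting $y^*:=h(x^*)$ then produces the unique pair $(x^*,y^*)$ satisfying $x^*=v(y^*)$ and $y^*=h(x^*)$, which is exactly the unique intersection point of the two curves, completing the argument.

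As for difficulty, there is no serious obstacle here. The only points requiring care are verifying that the composition $v\circ h$ respects domains and codomains---which is exactly what the containments $a\leq v(y)\leq b$ and $c\leq h(x)\leq d$ provide---and ensuring the strict inequality $\mu_v\mu_h<1$ is genuinely used, since it is precisely what upgrades the Lipschitz bound into a contraction and thereby forces uniqueness. If one prefers to avoid invoking the Banach theorem, existence can instead be obtained from the intermediate value theorem applied to the continuous function $\Psi(x):=v(h(x))-x$, which satisfies $\Psi(a)\geq 0\geq\Psi(b)$, while uniqueness follows from the same contraction estimate: any two fixed points $x_1,x_2$ obey $|x_1-x_2|\leq\mu_v\mu_h|x_1-x_2|$, forcing $x_1=x_2$.
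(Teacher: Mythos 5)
Your proof is correct and is essentially the argument given in the cited source (Wiggins, Lemma 25.1.4), which the paper itself quotes without proof: recast the intersection as a fixed-point equation $x=v(h(x))$ and apply the contraction mapping principle, with $\mu_v\mu_h<1$ supplying the contraction constant. Both the domain/codomain checks and the uniqueness argument are exactly right, so there is nothing to add.
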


Now, consider a map $f:D_1\cup D_2\to\mathbb{R}^2$, where
$$
D_1=\{(x,y)\in\mathbb{R}^2:\ -2\leq x\leq -1,\ 0\leq y\leq1\},
$$
$$
D_2=\{(x,y)\in\mathbb{R}^2:\ 1\leq x\leq 2,\ 0\leq y\leq1\}.
$$
Consider, also, a finite set $S=\{1,2,3,4\}$, four $\mu_h$-horizontal strips, $H^{j}_{i}\subset D_j$, $1\leq i,j\leq2$, and four $\mu_v$-vertical strips, $V^{j}_{i}\subset D_j$, $1\leq i,j\leq2$.

Suppose that $f$ maps $H^{1}_{i}$ homeomorphically onto $V^2_{i}$, and maps $H^{2}_{i}$ homeomorphically onto $V^1_{i}$, $1\leq i\leq 2$. Suppose, moreover, that $f$ satisfies the following two assumptions.

{\bf Assumption 1}: With $0\leq\mu_v\mu_h<1$, the horizontal boundaries of $H^1_i$ are mapped to the horizontal boundaries of $V^2_i$ and the vertical boundaries of $H^1_i$ are mapped to the vertical boundaries of $V^2_i$; the horizontal boundaries of $H^2_i$ are mapped to the horizontal boundaries of $V^1_i$ and the vertical boundaries of $H^2_i$ are mapped to the vertical boundaries of $V^1_i$.

{\bf Assumption 2}: Suppose that $H$ is a $\mu_h$-horizontal strip contained in $H^2_1\cup H^2_2\subset D_2$, and that
$$
f^{-1}(H)\cap H^1_i=\widetilde{H}^1_i,\ 1\leq i\leq 2,
$$
is a $\mu_h$-horizontal strip. Moreover,
$$
d(\widetilde{H}^1_i)\leq\nu_h d(H)\ \mbox{for some}\ 0<\nu_h<1.
$$
Similarly, suppose that $V$ is a $\mu_v$-vertical strip contained in $V^1_1\cup V^1_2\subset D_1$. Then,
$$
f(V)\cap V^2_i=\widetilde{V}^2_i,\ 1\leq i\leq 2,
$$
is a $\mu_v$-vertical strip. Moreover,
$$
d(\widetilde{V}^2_i)\leq\nu_v d(V)\ \mbox{for some}\ 0<\nu_v<1.
$$

Similar assumptions apply to $H$, which is a $\mu_h$-horizontal strip contained in $H^1_1\cup H^1_2\subset D_1$, and  $V$ is a $\mu_v$-vertical strip, which is contained in $V^2_1\cup V^2_2\subset D_2$.

An illustrative diagram is given in Figure \ref{examplefig-6}, where $H^1_1$ and $H^1_2$ are in green color in $D_1$, and $V^1_1$ and $V^1_2$ are in yellow color in $D_1$; $H^2_1$ and $H^2_2$ are in yellow color in $D_2$, and $V^2_1$ and $V^2_2$ are in green color in $D_2$.

\begin{figure}[h]
\begin{center}
\scalebox{0.8 }{ \includegraphics{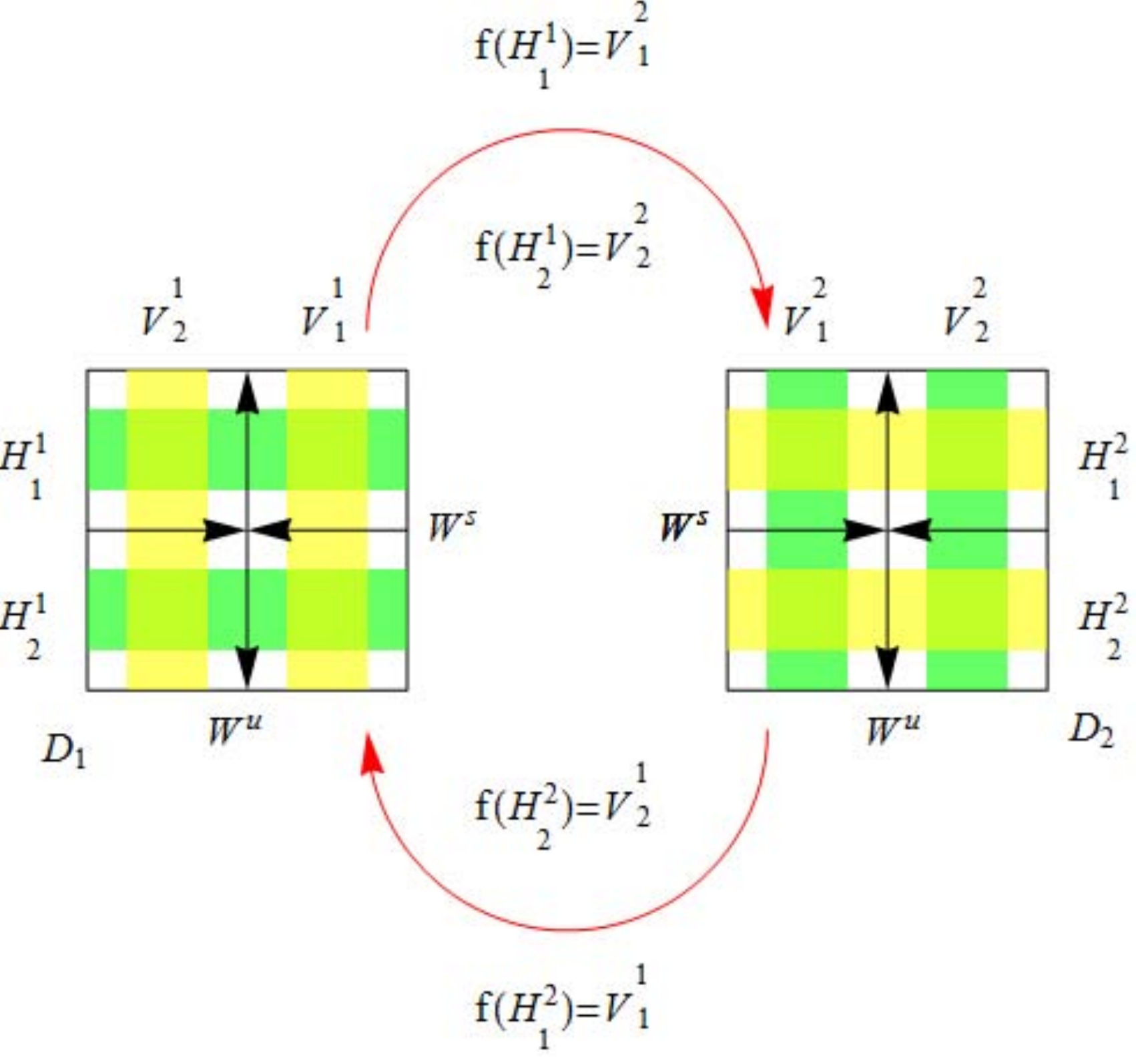}}
\renewcommand{\figure}{Fig.}
\caption{Illustrative diagram for the horseshoe in a subshift of finite type with matrix $A$, where $H^1_1$ and $H^1_2$ are in green color in $D_1$, and $V^1_1$ and $V^1_2$ are in yellow color in $D_1$; $H^2_1$ and $H^2_2$ are in yellow color in $D_2$, and $V^2_1$ and $V^2_2$ are in green color in $D_2$.
}\label{examplefig-6}
\end{center}
\end{figure}

Now, the following result can be established.

\begin{theorem}\label{chaoshorsesho-1}
Suppose that $f$ satisfies Assumptions 1 and 2. Then, $f$ has an invariant Cantor set $\Ld$, on which $f$ is topologically conjugate to a subshift of finite type with the matrix $A$ specified in \eqref{shiftmatrix1}, such that the following relations hold:
\begin{center}
\begin{tikzpicture}
    \node (E) at (0,0) {$\Ld$ };
    \node[right=of E] (F) at (4,0){$\Ld $};
    \node[below=of F] (A) {$\sum_4(A)$};
    \node[below=of E] (Asubt) {$\sum_4(A)$};
   \draw[->] (E)--(F) node [midway,above] {$f$};
    \draw[->] (F)--(A) node [midway,right] {$\psi$}
                node [midway,left] {};
    \draw[->] (Asubt)--(A) node [midway,below] {$\si$}
                node [midway,above] {};
    \draw[->] (E)--(Asubt) node [midway,left] {$\psi$};
\end{tikzpicture}
\end{center}
where $\psi$ is a homeomorphism mapping $\Ld$ onto $\sum_4(A)$.
\end{theorem}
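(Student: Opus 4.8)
The plan is to run the classical Conley--Moser symbolic coding in this two-box, four-strip setting, and to check that the admissibility it generates is exactly the matrix $A$. Write $H_1,\dots,H_4$ for the horizontal strips $H^1_1,H^1_2,H^2_1,H^2_2$ and $V_1,\dots,V_4$ for $V^1_1,V^1_2,V^2_1,V^2_2$, and identify the symbol $k$ with the strip $H_k$, so that $1,2$ label strips in $D_1$ and $3,4$ label strips in $D_2$. Since $f$ sends $H^1_i$ into $D_2$ and $H^2_i$ into $D_1$, the image of a point of $H_1$ or $H_2$ lies in a horizontal strip of $D_2$, forcing the next symbol into $\{3,4\}$, while the image of a point of $H_3$ or $H_4$ is forced into $\{1,2\}$; these are precisely the unit entries of the rows of $A$. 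Hence, setting $\Ld:=\bigcap_{n\in\mathbb Z}f^{-n}\big(\bigcup_{k}H_k\big)$, the itinerary $\psi(p)=(a_n)_{n\in\mathbb Z}$ prescribed by $f^n(p)\in H_{a_n}$ is well defined (the four strips are pairwise disjoint) and satisfies $A_{a_na_{n+1}}=1$, so $\psi$ maps $\Ld$ into $\sum_4(A)$.

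The core is to invert $\psi$: each admissible bi-infinite sequence $(a_n)$ must be realized by a unique point, built as the intersection of a horizontal curve (forward fibre) and a vertical curve (backward fibre). For the forward direction I would set $H_{a_0\cdots a_n}:=\bigcap_{k=0}^{n}f^{-k}(H_{a_k})=H_{a_0}\cap f^{-1}(H_{a_1\cdots a_n})$ and argue inductively: since admissibility puts $a_0$ and $a_1$ in opposite boxes, the first half of Assumption~2 (and its stated symmetric counterpart, used as the itinerary alternates between $D_1$ and $D_2$) shows that $f^{-1}$ of the horizontal strip $H_{a_1\cdots a_n}$ intersected with $H_{a_0}$ is again a $\mu_h$-horizontal strip whose width is contracted by the factor $\nu_h$. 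Thus $d(H_{a_0\cdots a_n})\le\nu_h^{\,n}\,d(H_{a_0})\to0$, and part (ii) of the nested-strip lemma yields a single $\mu_h$-horizontal curve $\gamma^h=\bigcap_{n\ge0}f^{-n}(H_{a_n})$. Dually, the second half of Assumption~2 shows that applying $f$ to a vertical strip and intersecting with the next admissible vertical strip $V^2_i$ or $V^1_i$ returns a $\mu_v$-vertical strip with width contracted by $\nu_v$; iterating along the backward itinerary $(\ldots,a_{-2},a_{-1})$ produces a nested sequence of vertical strips whose intersection, by part (i) of the same lemma, is a single $\mu_v$-vertical curve $\gamma^v$. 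Because $0\le\mu_v\mu_h<1$, the unique-intersection lemma gives exactly one point $p=\gamma^h\cap\gamma^v$, and by construction $f^n(p)\in H_{a_n}$ for all $n$. This produces a two-sided inverse to $\psi$, so $\psi:\Ld\to\sum_4(A)$ is a bijection.

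It remains to upgrade this bijection to a topological conjugacy. The intertwining $\psi\circ f=\si\circ\psi$ is immediate, since $f^{n}(f(p))=f^{n+1}(p)\in H_{a_{n+1}}$ means $\psi(f(p))$ is the left shift of $\psi(p)$. The set $\Ld$ is closed, being an intersection of $f$-preimages of the closed strips, and bounded, hence compact; $\psi$ is continuous because two points whose first $N$ forward and backward images lie in common strips have itineraries agreeing on the central block of length $2N+1$, so $d(\psi(p),\psi(p'))$ is small, and $\psi^{-1}$ is continuous because the geometric width contraction confines sequences with long common central blocks to common thin strips. A continuous bijection from a compact space onto a metric space is a homeomorphism, so $\psi$ is a homeomorphism and $\Ld$ inherits the Cantor structure of $\sum_4(A)$, completing the diagram. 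I expect the main obstacle to be precisely the inversion step: one must verify that the admissibility encoded by $A$ is exactly the condition making the successive strip intersections nonempty genuine strips (so that the zero entries of $A$ correspond exactly to empty intersections), and that Assumptions~1--2 are invoked in the correct direction---$f^{-1}$ producing the horizontal strips of the forward fibre and $f$ the vertical strips of the backward fibre---together with the width estimates $\nu_h,\nu_v$ that drive the two nested-strip lemmas.
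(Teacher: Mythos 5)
Your proposal is correct and follows essentially the same route as the paper: the paper's entire proof is a one-line citation to the Conley--Moser argument of Theorem 25.1.5 in Wiggins' book, adapted to the two-box setting, and your proposal is exactly that argument written out---itinerary coding with admissibility matching the block structure of $A$, nested horizontal/vertical strips contracted via Assumption~2, unique intersection from $0\leq\mu_v\mu_h<1$, and the compactness argument upgrading the bijection to a conjugacy. The only point worth noting is that you correctly identified and handled the alternation between $D_1$ and $D_2$ (invoking Assumption~2 and its symmetric counterpart as the itinerary switches boxes), which is precisely the modification of the classical proof that the paper leaves implicit.
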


\begin{proof}
It follows from arguments similar to the proof of \cite[Theorem 25.1.5]{Wiggins2003}.
\end{proof}

\subsection{A geometric criterion for the existence of chaos}

Consider an ordinary differential equation, $\dot{x}=\Phi(x),\ x\in\mathbb{R}^3$, where $\Phi:\mathbb{R}^3\to\mathbb{R}^3$ is differentiable. For any initial point $x_0\in\mathbb{R}^3$, let the solution to the corresponding initial value problem be $\phi(t,x_0)$, called a flow and denoted by $\phi^t$ for simplicity.

\begin{definition}\cite{Robinson}
An invariant set $\Ld$ for the flow $\phi^t$ defined on a smooth manifold $M$ has a hyperbolic structure, namely $\Ld$ is a hyperbolic invariant set, provided that
\begin{itemize}
\item[(i)] at each point $p$ in $\Ld$, the tangent space to $M$ can be split as the direct sum of $\mathbb{E}^u_p$, $\mathbb{E}^s_p$, and $\mbox{span}(\Phi(p))$:
    \beqq
    T_p(M)=\mathbb{E}^u_p\oplus\mathbb{E}^s_p\oplus\mbox{span}(\Phi(p));
    \eeqq
\item[(ii)] the above splitting is invariant under the action of the derivative, in the sense that
\beqq
D(\phi^t)_p\mathbb{E}^u_p=\mathbb{E}^u_{\phi^t(p)},\ D(\phi^t)_p\mathbb{E}^s_p=\mathbb{E}^s_{\phi^t(p)},\ D(\phi^t)_p\Phi(p)=\Phi(\phi^t(p));
\eeqq
\item[(iii)] $\mathbb{E}^u_p$ and $\mathbb{E}^s_p$ vary continuously with $p$;
    \item[(iv)] there exist $\mu>0$ and $C\geq1$ such that, for any $t\geq0$,
        \beqq
        |D\phi^t_p v^s|\leq Ce^{-\mu t}|v^s|\ \ \mbox{for}\ v^s\in\mathbb{E}^s_p,
        \eeqq
        \beqq
        |D\phi^{-t}_p v^u|\leq Ce^{-\mu t}|v^u|\ \ \mbox{for}\ v^u\in\mathbb{E}^u_p.
        \eeqq
\end{itemize}
\end{definition}

Furthermore, assume that there is a hyperbolic periodic orbit with period $T>0$; that is, for any point on the periodic orbit, there exists a cross-section passing through this point. The point on the periodic orbit is a saddle fixed point of the Poincar\'{e} map. For an illustration of the hyperbolic periodic orbit by using Poincar\'{e} map, see Figures 10.1.2 and 10.1.3 in \cite{Wiggins2003}.

Now, take any point $x_0$ on this periodic orbit. Its stable and unstable manifolds are defined as follows:
\beqq
W^s(x_0)=\bigg\{x\in\mathbb{R}^3:\ \lim_{t\to+\infty}d(\phi(t,x),\phi(t,x_0))=0\bigg\}
\eeqq
and
\beqq
W^u(x_0)=\bigg\{x\in\mathbb{R}^3:\ \lim_{t\to-\infty}d(\phi(t,x),\phi(t,x_0))=0\bigg\},
\eeqq
where $d(\cdot,\cdot)$ is a metric induced by the Euclidean norm on $\mathbb{R}^3$. Similarly, its local stable and unstable manifolds are defined by
\beqq
W^s_r(x_0)=\bigg\{x\in\mathbb{R}^3:\ d(x,x_0)<r\ \mbox{and}\  \lim_{t\to+\infty}d(\phi(t,x),\phi(t,x_0))=0\bigg\}
\eeqq
and
\beqq
W^u_r(x_0)=\bigg\{x\in\mathbb{R}^3:\ d(x,x_0)<r\ \mbox{and}\  \lim_{t\to-\infty}d(\phi(t,x),\phi(t,x_0))=0\bigg\},
\eeqq
where $r>0$ is a positive constant. For convenience, $W^s_{loc}(x_0)$ and $W^u_{loc}(x_0)$ represent the local stable and unstable manifolds, both with sufficiently small radii.

Actually, the stable and unstable manifolds can be defined by another method. Consider a discrete map induced by the flow $g=\phi(T,\cdot):\mathbb{R}^3\to\mathbb{R}^3$. It is evident that, for any point $x_0$ on this periodic orbit, one has $\phi(T,x_0)=x_0$; that is, any point on this periodic orbit is a fixed point. By the assumption of the hyperbolic periodic orbit, for the discrete map $g$ and any point $x_0$ on this periodic orbit, there exists a decomposition of the tangent space, $\mathbb{E}^s_{x_0}\oplus \mathbb{E}^u_{x_0}\oplus \mathbb{E}^c_{x_0}$, where $\mathbb{E}^s_{x_0}$ is the contraction direction, $\mathbb{E}^u_{x_0}$ is the expansion direction, and $\mathbb{E}^c_{x_0}=\mbox{span}\Phi(x_0)$ is the center direction (flow direction), and the derivative of the $g$ along this center direction $\mathbb{E}^c_{x_0}$ is $1$. The local stable and unstable manifolds with respect to $g$ are denoted by $W^s_{loc}(x_0,g)$ and $W^u_{loc}(x_0,g)$, respectively, for which the existence of the local stable and unstable manifolds are guaranteed by the Stable Manifold Theorem \cite{Robinson,Wiggins2003}. The stable and unstable manifolds of $x_0$ with respect to the flow $\phi^t$ can be expressed by
\beqq
W^s(x_0)=\bigcup_{k\in\mathbb{Z},k\leq0}\phi(kT,W^s_{loc}(x_0,g)),
\eeqq
\beqq
W^u(x_0)=\bigcup_{k\in\mathbb{Z},k\geq0}\phi(kT,W^u_{loc}(x_0,g)).
\eeqq

\begin{remark}\label{poincarestable1}
For any point $x_0$ on the periodic orbit and any cross-section containing the point $x_0$, suppose that $x_0$ is a saddle fixed point of the Poincar\'{e} map, denoted by $P$. Then, there exist local stable and unstable manifolds of $x_0$ with respect to the Poincar\'{e} map, denoted by $W^s_{loc}(x_0,P)$ and $W^u_{loc}(x_0,P)$, respectively. However, $W^s_{loc}(x_0,P)$ and $W^u_{loc}(x_0,P)$ cannot be used to define the stable and unstable manifolds of $x_0$ with respect to the flow $\phi^t$, because in the present case, in the Poincar\'{e} map defined by \eqref{poincaremap1}, $\tau(x)$ might not be equal to $T$. Yet, to define the stable and unstable manifolds of $x_0$ with respect to the flow $\phi^t$, it is required that $\tau(x)=T$; that is, the return time should be equal for all points on the same cross-section.
\end{remark}

Now, the following main result is obtained.

\begin{theorem}\label{mainresult-1}
Consider an ordinary differential equation, $\dot{x}=\Phi(x)$, $x\in\mathbb{R}^3$, where $\Phi:\mathbb{R}^3\to\mathbb{R}^3$ is differentiable.
Assume that
\begin{itemize}
\item there exist a hyperbolic periodic orbit of period $T>0$, and two points $p$ and $q$ on this periodic orbit with $\phi(\tfrac{T}{2},p)=q$ and $\phi(\tfrac{T}{2},q)=p$;
\item there are an open subset $\Upsilon_1\subset W^u_{loc}(p)\times W^s_{loc}(p)$ containing a line segment of $W^u_{loc}(p)$,  an open subset $\Upsilon_2\subset W^u_{loc}(q)\times W^s_{loc}(q)$ containing a line segment of $W^u_{loc}(q)$, and two positive integers $m_p$ and $m_q$ such that $\phi(\tfrac{T}{2}+m_pT,\Upsilon_1)$ contains a segment of $W^s_{loc}(q)$ with $\phi(\tfrac{T}{2}+m_pT,\Upsilon_1)\subset W^u_{loc}(q)\times W^s_{loc}(q)$, and $\phi(\tfrac{T}{2}+m_qT,\Upsilon_2)$ contains a segment of $W^s_{loc}(p)$ with $\phi(\tfrac{T}{2}+m_qT,\Upsilon_2)\subset W^u_{loc}(p)\times W^s_{loc}(p)$.
\end{itemize}

Then, there exist a positive integer $m$ and an invariant set $\Ld\subset\mathbb{R}^3$ such that the following relations hold:
\begin{center}
\begin{tikzpicture}
    \node (E) at (0,0) {$\Ld$ };
    \node[right=of E] (F) at (4,0){$\Ld $};
    \node[below=of F] (A) {$\sum_4(A)$};
    \node[below=of E] (Asubt) {$\sum_4(A)$};
   \draw[->] (E)--(F) node [midway,above] {$\phi(\tfrac{T}{2}+mT,\cdot)$};
    \draw[->] (F)--(A) node [midway,right] {$\Psi$}
                node [midway,left] {};
    \draw[->] (Asubt)--(A) node [midway,below] {$\si$}
                node [midway,above] {};
    \draw[->] (E)--(Asubt) node [midway,left] {$\Psi$};
\end{tikzpicture}
\end{center}
where $\Psi$ is a homeomorphism from $\Ld$ to $\sum_4(A)$, which is a topological conjugacy.
\end{theorem}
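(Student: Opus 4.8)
The plan is to realize the time-$(\tfrac{T}{2}+mT)$ map as a two-box horseshoe map of exactly the kind axiomatized in Theorem \ref{chaoshorsesho-1}, and then invoke that theorem to obtain the conjugacy with $\sum_4(A)$. Concretely, I would use the local unstable/stable manifolds at $p$ and at $q$ to coordinatize two cross-sections, identifying a neighborhood of $p$ inside $W^u_{loc}(p)\times W^s_{loc}(p)$ with the box $D_1$ and a neighborhood of $q$ inside $W^u_{loc}(q)\times W^s_{loc}(q)$ with the box $D_2$, with the unstable direction playing the role of the horizontal axis and the stable direction that of the vertical axis. The essential reason for working with the flow map $f:=\phi(\tfrac{T}{2}+mT,\cdot)$ rather than a genuine Poincar\'e return map is the obstruction recorded in Remark \ref{poincarestable1}: the return time of a Poincar\'e map is not constant, so it need not respect the hyperbolic splitting, whereas for $g=\phi(T,\cdot)$ the derivative preserves $\mathbb{E}^s_{x_0}$, $\mathbb{E}^u_{x_0}$ and the flow line through $x_0$. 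Since $\phi(\tfrac{T}{2},p)=q$ and $\phi(\tfrac{T}{2},q)=p$, the map $f$ carries the $p$-box toward the $q$-box and the $q$-box toward the $p$-box, which is precisely the alternating structure encoded by the transition matrix $A$ of \eqref{shiftmatrix1} (symbols $1,2$ for the two strips in $D_1$ and symbols $3,4$ for the two strips in $D_2$).

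Next I would extract the hyperbolic rates. From parts (ii) and (iv) of the definition of a hyperbolic invariant set, $g=\phi(T,\cdot)$ contracts $\mathbb{E}^s$ by at most $Ce^{-\mu T}$ and $g^{-1}$ contracts $\mathbb{E}^u$ by at most $Ce^{-\mu T}$, uniformly along the orbit. Hence $g^{m}=\phi(mT,\cdot)$ contracts stable widths by a factor $\le Ce^{-\mu m T}$ while expanding unstable lengths at the reciprocal rate. Choosing $m$ large then makes both the Lipschitz (cone) constants $\mu_v,\mu_h$ with $0\le\mu_v\mu_h<1$ and the width-contraction constants $0<\nu_h,\nu_v<1$ of Assumption 2 as favorable as required. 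This is the step that converts the qualitative crossing hypotheses into the quantitative estimates demanded by Theorem \ref{chaoshorsesho-1}; it also forces $m\ge\max\{m_p,m_q\}$, and I would fix a single common $m$ for both boxes, since each half-loop $p\to q$ and $q\to p$ is handled by the same $f$, each taking time $\tfrac{T}{2}+mT$.

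The geometric heart is the construction of the strips. The hypothesis that $\Upsilon_1$ contains a segment of $W^u_{loc}(p)$ while $\phi(\tfrac{T}{2}+m_pT,\Upsilon_1)$ lies inside the $q$-box and contains a segment of $W^s_{loc}(q)$ says that $f$ stretches the unstable segment of $p$ enormously yet confines its image to the bounded $q$-box; such a strongly stretched but confined image must fold back across the stable direction of $q$, producing (at least) two vertical strips $V^2_1,V^2_2\subset D_2$ whose $f$-preimages are two horizontal strips $H^1_1,H^1_2\subset D_1$ aligned along $W^u_{loc}(p)$, exactly as in Figure \ref{examplepolyfigure6}. The symmetric hypothesis on $\Upsilon_2$ produces $V^1_1,V^1_2\subset D_1$ and $H^2_1,H^2_2\subset D_2$. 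Because $f$ preserves the stable/unstable splitting, the horizontal (respectively vertical) boundary curves of each $H$-strip, taken along stable/unstable manifold arcs, are carried to the horizontal (respectively vertical) boundaries of the corresponding $V$-strip, giving Assumption 1, while the rate estimates of the previous paragraph give Assumption 2.

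Finally, with $D_1,D_2$, the four horizontal strips $H^j_i$, the four vertical strips $V^j_i$, and $f=\phi(\tfrac{T}{2}+mT,\cdot)$ satisfying Assumptions 1 and 2, Theorem \ref{chaoshorsesho-1} yields an invariant Cantor set $\Ld$ together with a homeomorphism $\Psi:\Ld\to\sum_4(A)$ conjugating $f|_{\Ld}$ to $\sigma_A$, which is precisely the asserted commuting diagram; the positivity of the topological entropy, hence genuine chaos, is then furnished by Lemma \ref{sympositive1}. I expect the main obstacle to be the third step: making the \emph{stretch-and-fold inside the $q$-box} rigorous, in particular guaranteeing that the confined image crosses the stable direction in two full strips (rather than tangentially or only once) and that the strip boundaries can be selected along manifold arcs so that Assumption 1 holds exactly, together with the bookkeeping needed to pass to a single $m$ while preserving the crossing picture inherited from $m_p$ and $m_q$.
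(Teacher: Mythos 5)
Your overall reduction to Theorem \ref{chaoshorsesho-1} via the map $f=\phi(\tfrac{T}{2}+mT,\cdot)$ acting on the two boxes $\Pi_p\cong D_1$ and $\Pi_q\cong D_2$ is the same frame the paper uses, but the step you yourself flag as the ``main obstacle'' --- the stretch-and-fold inside the $q$-box --- is a genuine gap, and it is not the paper's mechanism. Nothing in the hypotheses forces the image of a neighborhood of the unstable segment to fold: an open set that is stretched along $W^u(p)$, confined to $\Pi_q$, and contains a segment of $W^s_{loc}(q)$ can perfectly well cross $\Pi_q$ exactly once, monotonically. Indeed, Remark \ref{trans-asump} is precisely about the point that in this setting the conventional picture (unstable images folding back across stable directions, as at a transversal homoclinic point) is \emph{not} what produces the horseshoe here. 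So deriving both horizontal strips $H^1_1,H^1_2\subset D_1$ as preimages of a single folded image cannot be made rigorous from the stated assumptions.

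The paper instead obtains the two strips per box from two distinct locations, neither requiring a fold. In $\Pi_p$ it takes one thin horizontal strip $\hat{H}^1_1=D^s_p\times(p_0-\eta^u_p,p_0+\eta^u_p)$ around a point $p_0\in D^u_p\setminus\{p\}$ on the unstable segment whose forward orbit realizes the generalized ``heteroclinic'' connection into $\Pi_q$ (this is what the hypothesis on $\Upsilon_1$ supplies), and a \emph{second} thin strip $H^1_{2,k}=D^s_p\times(p-\ep^u_{p,k},p+\ep^u_{p,k})$ around the periodic point $p$ itself, whose image lands near $q$ because $\phi(\tfrac{T}{2},p)=q$; symmetrically for $\Pi_q$ with $q_0$ and $q$. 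Each of these four strips is mapped by $\phi(\tfrac{T}{2}+mT,\cdot)$ to a \emph{single} vertical strip crossing the opposite cross-section fully, and the tool that delivers the full crossing, the cone constants $0\leq\mu_h\mu_v<1$, and the width contraction of Assumption 2 is the $\ld$-Lemma (Inclination Lemma) of \cite[Lemma 7.1]{PalisMelo1982}, applied to disks transverse to the stable manifolds --- not the raw exponential rates in the definition of hyperbolicity, which control norms along the splitting on the orbit but do not by themselves give the alignment of images of nearby strips with $W^u$. Your proposal is missing both ideas: the strip around the periodic point as the source of the second symbol in each box, and the Inclination Lemma as the device converting the qualitative hypotheses into Assumptions 1 and 2 of Theorem \ref{chaoshorsesho-1}. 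With those substituted for the folding step, the rest of your argument (choice of one sufficiently large $m$ exceeding the relevant thresholds, then the appeal to Theorem \ref{chaoshorsesho-1}, with positive entropy from Lemma \ref{sympositive1}) matches the paper.
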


\begin{remark}\label{trans-asump}
 The conventional assumption that $W^u(p)\cap W^s(q)\neq\emptyset$ and $W^s(p)\cap W^u(q)\neq\emptyset$ does not imply the existence of Smale horseshoe in this situation.

Suppose that there is $p^*\in W^u(p)\cap W^s(q)$. Since $p$ and $q$ are on the periodic orbit of period $T$, one has $\lim_{k\to-\infty}d(\phi(t,p),\phi(t,p^*))=0$. This, together with $\phi(kT,p)=p$, implies that $\lim_{k\to-\infty}\phi(kT,p^*)=p$. Similarly, $\lim_{k\to+\infty}\phi(kT,p^*)=q$.
Therefore, this horseshoe structure could not be obtained by the conventional assumptions. On the other hand, the conventional assumptions might imply the existence of the classical Smale horseshoes.
\end{remark}

\begin{remark}
A simplified model for a Smale horseshoe is illustrated by Figure \ref{examplepolyfigure6}. For a particular discrete system with a Smale horseshoe in a subshift of finite type with matrix $A$, see \cite{ZhangChen2018}.
\end{remark}

\begin{remark}
Continuous-time autonomous dynamical systems can be classified into two types according to their attractors: self-excited systems and hidden-attraction systems. For a system, if its basin of attraction intersects arbitrarily small neighborhoods of an existing equilibrium, it is called self-excited; otherwise, namely if its basin of attraction does not intersect a small neighborhood of an existing equilibrium, it is called hidden \cite{LeonKuznVaga2011}. For example, the chaotic Lorenz system \cite{Lorenz} and Chen system \cite{Chen1999} with the typical parameter values are self-excited systems, and there are some other systems with hidden attractors \cite{DudkJafaKapKuznLeonPras2016, JafariSprottNazarimehr2015}.

Maps with hidden dynamics could be similarly defined. Some hidden attractors in one-dimensional maps were obtained in \cite{JafariPhamMoghtadaeiKingni2016} by extending the Logistic map. A class of two-dimensional quadratic maps with hidden dynamics were studied in \cite{JiangLiuWeiZhang2016}. A one-dimensional and a two-dimensional generalized H\'{e}non map with hidden dynamics were studied in \cite{ZhangChen2018}.

Theorem \ref{mainresult-1} above can be used to study the chaotic dynamics of autonomous systems with hidden chaotic attractors.
\end{remark}

The following detailed analysis of chaotic dynamics is divided into two steps:
\begin{itemize}
\item[1.] illustrating the vector field on a neighborhood of the hyperbolic periodic orbit;
\item[2.] analyzing the existence of a Smale horseshoe in a subshift of finite type with matrix $A$.
\end{itemize}

{\bf Step 1.} Illustrating the vector field on a neighborhood of the hyperbolic periodic orbit (not an equilibrium).

First, two examples with periodic orbits are provided to illustrate a system with a saddle-focus periodic orbit.

{\bf{Example 1.}}
Consider the following local representation near a periodic orbit:
$$
\left\{
  \begin{array}{ll}
    \dot{x}&=-y+(x^2+y^2-1)F_1(x,y,z):=G_1(x,y,z), \\
    \dot{y}&=x+(x^2+y^2-1)F_2(x,y,z):=G_2(x,y,z), \\
    \dot{z}&=(z+x^2+y^2-1)F_3(x,y,z):=G_3(x,y,z).
  \end{array}
\right.
$$

It is evident that $\{(x,y,z)\in\mathbb{R}^3:\ x^2+y^2=1,\ z=0\}$ is a periodic solution to this system. Next, it is shown that under certain conditions it has a saddle-focus near this periodic orbit.

By direct calculation, one has
\beqq
\frac{\partial G_1}{\partial x}=2xF_1+(x^2+y^2-1)\frac{\partial F_1}{\partial x},\ \frac{\partial G_1}{\partial y}=-1+2yF_1+(x^2+y^2-1)\frac{\partial F_1}{\partial y},\ \frac{\partial G_1}{\partial z}=(x^2+y^2-1)\frac{\partial F_1}{\partial z},
\eeqq
\beqq
\frac{\partial G_2}{\partial x}=1+2xF_2+(x^2+y^2-1)\frac{\partial F_2}{\partial x},\ \frac{\partial G_2}{\partial y}=2yF_2+(x^2+y^2-1)\frac{\partial F_2}{\partial y},\ \frac{\partial G_2}{\partial z}=(x^2+y^2-1)\frac{\partial F_2}{\partial z},
\eeqq
\beqq
\frac{\partial G_3}{\partial x}=2xF_3+(z+x^2+y^2-1)\frac{\partial F_3}{\partial x},\ \frac{\partial G_3}{\partial y}=2yF_3+(z+x^2+y^2-1)\frac{\partial F_3}{\partial y},\
\eeqq
\beqq
\frac{\partial G_3}{\partial z}=F_3+(z+x^2+y^2-1)\frac{\partial F_3}{\partial z}.
\eeqq
So, the Jacobian on this periodic orbit is
$$
\left(
  \begin{array}{ccc}
    \tfrac{\partial G_1}{\partial x} &  \tfrac{\partial G_1}{\partial y} &  \tfrac{\partial G_1}{\partial z} \\
   \tfrac{\partial G_2}{\partial x} &  \tfrac{\partial G_2}{\partial y} &  \tfrac{\partial G_2}{\partial z} \\
    \tfrac{\partial G_3}{\partial x} &  \tfrac{\partial G_3}{\partial y} &  \tfrac{\partial G_3}{\partial z} \\
  \end{array}
\right)=\left(
  \begin{array}{ccc}
    2xF_1 & -1+2yF_1 & 0 \\
    1+2xF_2 & 2yF_2 & 0 \\
    2xF_3 & 2yF_3 & F_3 \\
  \end{array}
\right).
$$
So, the eigenvalues are the solutions to the equation
\begin{align*}
&\left|
  \begin{array}{ccc}
    2xF_1-\ld & -1+2yF_1 & 0 \\
    1+2xF_2 & 2yF_2-\ld & 0 \\
    2xF_3 & 2yF_3 & F_3-\ld \\
  \end{array}
\right|=
\left|
    \begin{array}{cc}
      2xF_1-\ld & -1+2yF_1 \\
     1+2xF_2 & 2yF_2-\ld \\
    \end{array}
  \right|(F_3-\ld)\\
=&(\ld^2+(-2yF_2-2xF_1)\ld+(1+2xF_2-2yF_1))(F_3-\ld)=0.
\end{align*}
It is obvious that one eigenvalue is $F_3$, and the other two eigenvalues are the solutions to the quadratic polynomial $\ld^2+(-2yF_2-2xF_1)\ld+(1+2xF_2-2yF_1)=0$. If the following inequalities hold (along the periodic orbit):
\begin{itemize}
\item $F_3$ is real and $F_3>0$,
\item $-2yF_2-2xF_1>0$,
\item $(-2yF_2-2xF_1)^2-4(1+2xF_2-2yF_1)<0$,
\end{itemize}
then this periodic orbit is called saddle-focus.

For example, choose $F_3=\ga>0$, $F_1=-\al x(x^2+y^2)$, and $F_2=-\al y(x^2+y^2)$, where $\ga$ and $\al\in(0,1)$ are two constants. By direct calculation, we have $-2yF_2-2xF_1=2\al>0$, $(-2yF_2-2xF_1)^2-4(1+2xF_2-2yF_1)=4\al^2-4=4(\al^2-1)<0$, the eigenvalues are solutions to the  equation $\ld^2+2\al\ld+1=0$, the solutions are $\frac{-2\al\pm\sqrt{4(\al^2-1)}}{2}=-\al\pm\sqrt{1-\al^2}i$.

{\bf{Example 2.}}
Consider another example with non-constant eigenvalues:
$$
\left\{
  \begin{array}{ll}
    \dot{x}&=-y+(z+x^2+y^2-1)F_1(x,y,z), \\
    \dot{y}&=x+(z+x^2+y^2-1)F_2(x,y,z), \\
    \dot{z}&=(z+x^2+y^2-1)F_3(x,y,z).
  \end{array}
\right.
$$
It is evident that $\{(x,y,z)\in\mathbb{R}^3:\ x^2+y^2=1,\ z=0\}$ is a periodic solution to this system. Next, it is shown that under certain conditions it has a saddle-focus near this periodic orbit.

The Jacobian on this periodic orbit is
$$
\left(
  \begin{array}{ccc}
    2xF_1 & -1+2yF_1 & F_1 \\
    1+2xF_2 & 2yF_2 & F_2 \\
    2xF_3 & 2yF_3 & F_3 \\
  \end{array}
\right)
$$

The characteristic function is
$$
\left|
  \begin{array}{ccc}
    2xF_1-\ld & -1+2yF_1 & F_1 \\
    1+2xF_2 & 2yF_2-\ld & F_2 \\
    2xF_3 & 2yF_3 & F_3-\ld \\
  \end{array}
\right|=-\ld^3+\ld^2(2xF_1+2yF_2+F_3)+\ld(1+2xF_2-2yF_1)-F_3=0.
$$
Let $F_1=-\al x(x^2+y^2)$ and $F_2=-\al y(x^2+y^2)$, where $\al>0$ is a constant. The eigenvalue function is simplified as follows $\ld^3+\ld^2(2\al-F_3)-\ld+F_3=0$. For this cubic equation, if it has a positive solution, and two conjugate complex solutions, then the corresponding periodic orbit is saddle-focus.

Suppose $\ld=t-\frac{2\al-F_3}{3}$. Then, one has
\begin{align*}
 \ld^3+(2\al-F_3)\ld^2-\ld+F_3
=t^3+\bigg(-\frac{(2\al-F_3)^2}{3}-1\bigg)t+\frac{2}{27}(2\al-F_3)^3+\frac{2\al-F_3}{3}+F_3.
\end{align*}
For the cubic polynomial, if
$$
\bigg(\frac{1}{27}(2\al-F_3)^3+\frac{2\al-F_3}{6}+\frac{F_3}{2}\bigg)^2+\bigg(-\frac{(2\al-F_3)^2}{9}-\frac{1}{3}\bigg)^3>0,
$$
then there are a positive solution, and two conjugate complex solutions.

It is evident that if $F_3\gg\al>0$  is sufficiently large restricted to the periodic orbit $\{(x,y,z)\in\mathbb{R}^3:\ x^2+y^2=1,\ z=0\}$, then the expanding direction is almost parallel to the $z$-axis.

Suppose that $\ld_0$ is the eigenvalue in the unstable subspace. Hence, the direction of the unstable subspace is parallel to the eigenvector, which is the solution to the following equation:
$$
\left(
  \begin{array}{ccc}
    2xF_1-\ld_0 & -1+2yF_1 & F_1 \\
    1+2xF_2 & 2yF_2-\ld_0 & F_2 \\
    2xF_3 & 2yF_3 & F_3-\ld_0 \\
  \end{array}
\right)
\left(
  \begin{array}{c}
    x_1 \\
    x_2 \\
    x_3 \\
  \end{array}
\right)=
\left(
  \begin{array}{c}
    0 \\
    0 \\
    0 \\
  \end{array}
\right).
$$
Since this $\ld_0$ is a single root, and $F_3$ is sufficiently large, one has $\left|
  \begin{array}{cc}
    2xF_1-\ld_0 & -1+2yF_1 \\
    1+2xF_2 & 2yF_2-\ld_0 \\
  \end{array}
\right|\neq0$. Hence, the eigenvector can be chosen as a solution to the following equation:
$$
\left(
  \begin{array}{cc}
    2xF_1-\ld_0 & -1+2yF_1  \\
    1+2xF_2 & 2yF_2-\ld_0  \\
  \end{array}
\right)
\left(
  \begin{array}{c}
    x_1 \\
    x_2 \\
  \end{array}
\right)=
\left(
  \begin{array}{c}
    -F_1 \\
    -F_2 \\
  \end{array}
\right).
$$
Hence, the eigenvector is $(\tfrac{F_1\ld_0-F_2}{\ld^2_0+2\al\ld_0+1},\tfrac{F_1+\ld_0 F_2}{\ld_0^2+2\al\ld_0+1},1)$. For sufficiently large $\ld_0$, this vector is parallel to the $z$-axis.

Next, a geometric description of the general vector field near the periodic orbit is provided.

Let $\ga=\{\phi(t,x_0):\ t\in[0,T)\}$ be the periodic orbit, where $x_0$ is any point on this periodic orbit. Consider a cross-section passing this point $x_0$ and the corresponding Poincar\'{e} map $P$. The stable and unstable manifolds of the periodic orbit are
\beq\label{periodicmanifold}
W^s(\ga)=\bigcup_{t\leq 0}\phi(t,W^s_{loc}(x_0,P))\ \ \mbox{and}\ \ W^u(\ga)=\bigcup_{t\geq0}\phi(t,W^u_{loc}(x_0,P)),
\eeq
where $W^s_{loc}(x_0,P)$ and $W^u_{loc}(x_0,P)$ are the local stable and unstable manifolds at the point $x_0$ with respect to the Poincar\'{e} map. In \eqref{periodicmanifold}, $W^s(\ga)$ and $W^u(\ga)$ are two-dimensional surfaces, which intersect on the closed curve $\ga$. For an illustrative diagram, see Figure 10.1.3 in \cite{Wiggins2003}.

Note that a periodic orbit might be homeomorphic to a knot \cite{BirmanWilliams1983, Williams1984}. For illustration, consider a periodic orbit that is homeomorphic to a circle. Even for this simple case, the vector field near the periodic orbit might not be simple, since there might exist M\"{o}bius bands (or M\"{o}bius strips) contained in $W^s(\ga)$ and $W^u(\ga)$, respectively. For example, construct a simple vector field defined in a neighborhood of a periodic orbit as follows: start from a region $V=[-1,1]\times[-1,1]\times[1,2]\subset\mathbb{R}^3$ with the vector field described by
\beqq
\vec{r}=-x\frac{\partial}{\partial x}+y \frac{\partial}{\partial y}+\frac{\partial}{\partial z},\ w=(x,y,z)\in V.
\eeqq
For this set $V$, consider the quotient space given by the following equivalent relationship:
\beqq
\widetilde{V}=\{(x,y,z)\in V:\ (x,y,1)\sim(-x,-y,2)\},
\eeqq
where $(x,y,1)$ and $(-x,-y,2)$ are regarded as the same point in the quotient space $\widetilde{V}$. By the definition of the vector field $\vec{r}$, this induces a natural continuous vector field on $\widetilde{V}$, denoted by $\vec{R}$. For convenience, use the coordinates on $V$ to represent the point on $\widetilde{V}$. For the vector field $\vec{R}$, it is evident that there exists a periodic orbit, $\{(0,0,t):\ t\in[0,1]\}$. This periodic orbit is hyperbolic, which is contracting along the $x$-direction and expanding along the $y$-direction. There exist M\"{o}bius bands (or M\"{o}bius strips) contained in $W^s(\ga)$ and $W^u(\ga)$, respectively.
For more discussions on the existence of M\"{o}bius bands for vector fields and their corresponding dynamics, see \cite[Section 27.2]{Wiggins2003}.

{\bf Step 2.} Analyzing the chaotic dynamics. The discussions are divided into four parts.

{\bf (i)} Consider the local coordinates on the cross-sections at the points $p$ and $q$.

Since the periodic orbit is hyperbolic, there are local stable and unstable manifolds for both the points $p$ and $q$. Take two cross-sections, $\Pi_p$ and $\Pi_q$ at the points $p$ and $q$, respectively, where these two cross-sections are generated by the product of the local stable and unstable manifolds. And, the local coordinates on the cross-sections are taken as $W^s_{loc}(w)$ and $W^u_{loc}(w)$, respectively, denoted by $W^s(w)$ and $W^u(w)$ for simplicity.

Figure \ref{examplefigure1} shows an illustrative diagram for the hypotheses of the system. Consider a periodic orbit (in blue color) with period $T>0$, and two points $p$ and $q$ on the periodic orbit satisfying $\phi(\tfrac{T}{2},p)=q$ and $\phi(\tfrac{T}{2},q)=p$. By the assumption of $\Upsilon_1$, the red line represents the orbit from $W^u(q)$ to $W^s(p)$, and the green line refers to the orbit from $W^u(p)$ to $W^s(q)$. Note that $W^{\si}_{loc}(p)\setminus\{p\}$ and $W^{\si}_{loc}(q)\setminus\{q\}$ contain two disjoint curves, respectively. As seen from Remark \ref{poincarestable1}, for any cross-section of the point $w$ on the periodic orbit, the local stable and unstable manifolds of $w$ might not be contained in the cross-section.

\begin{figure}[hpt]
\begin{center}
\scalebox{0.6 }{ \includegraphics{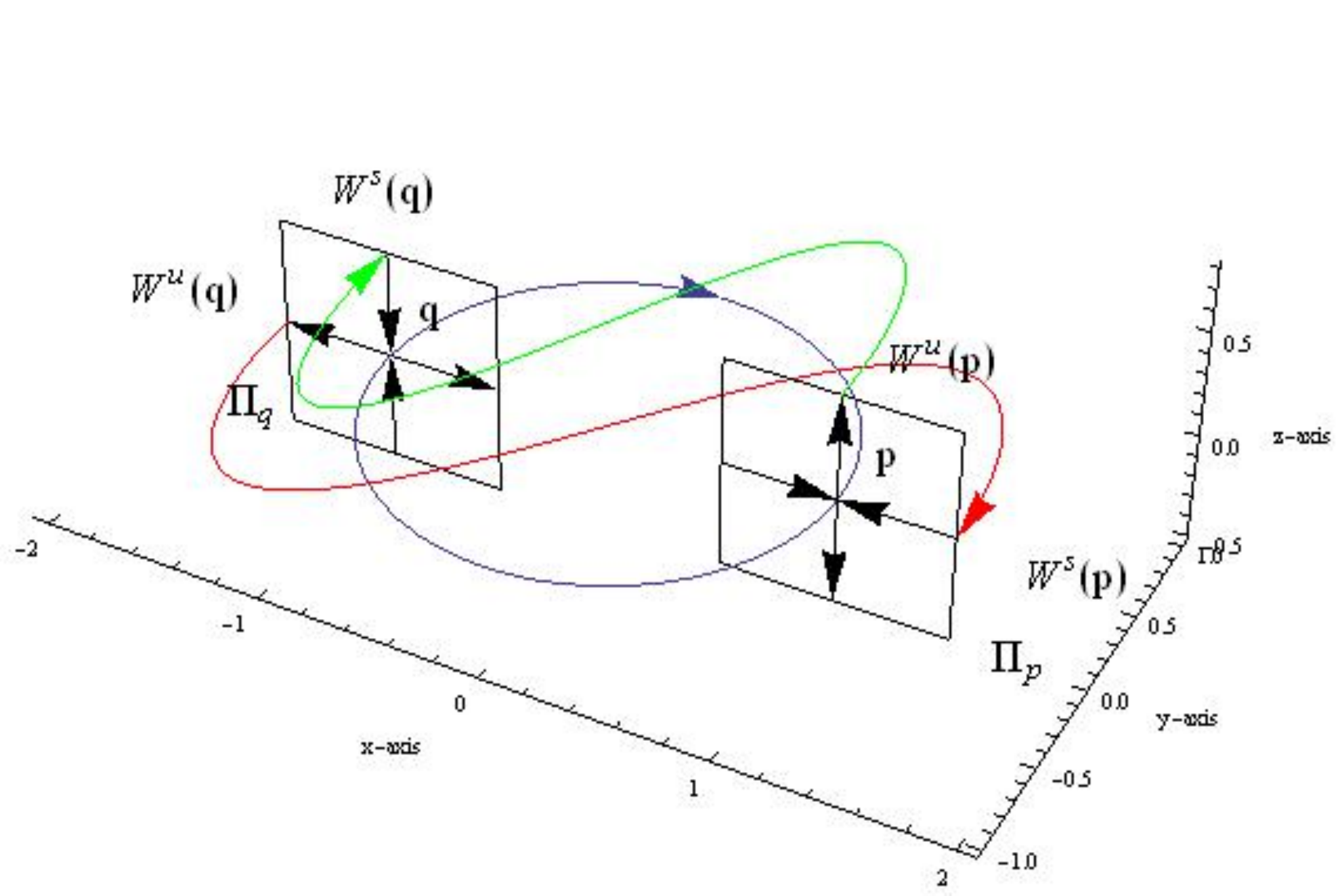}}
\renewcommand{\figure}{Fig.}
\caption{Illustrative diagram for the hypotheses of the system, where the periodic orbit with period $T$ is in blue color, the red line represents the orbit from $W^u(q)$ to $W^s(p)$, and the green line indicates the orbit from $W^u(p)$ to $W^s(q)$.} \label{examplefigure1}
\end{center}
\end{figure}

Then, by identifying $\mathbb{E}_p^{\si}$ and $\mathbb{E}_q^{\si}$ with a subspace for $\si=s,u$, one can take coordinates so that a neighborhood can be considered as a subset of $\mathbb{E}^s_p\times \mathbb{E}^u_p$ or $\mathbb{E}^s_q\times \mathbb{E}^u_q$, and the local stable and unstable manifolds are disks in the subspaces given by the splitting, $W^s_{loc}(p)\subset \mathbb{E}^s_p\times\{0\}$, $W^u_{loc}(p)\subset\{0\}\times \mathbb{E}^u_p$, $W^s_{loc}(q)\subset \mathbb{E}^s_q\times\{0\}$, and $W^u_{loc}(q)\subset\{0\}\times \mathbb{E}^u_q$.

Choose positive constants $\de^s_p$, $\de^u_p$, $\de^s_q$, $\de^u_q$, and set
\beqq
D^s_p:=W^s_{\de^s_p}(p),\ D^u_p:=W^u_{\de^u_p}(p),\ D^s_q:=W^s_{\de^s_q}(q),\ \mbox{and}\ D^u_q:=W^u_{\de^u_q}(q).
\eeqq
For convenience, suppose that $\Pi_p=D^s_p\times D^u_p$ and $\Pi_q=D^s_q\times D^u_q$.

{\bf (ii)} Consider the complex dynamics on the generalized ``heteroclinic" orbit joining the points $p$ and $q$.

By the assumptions on $\Upsilon_1$ and $\Upsilon_2$,
there exist $p_0\in D^u_p\setminus\{p\}$, $q_0\in D^u_q\setminus\{q\}$, a positive integer $k_1\geq\max\{m_p,m_q\}$, and two constants $0<\eta^u_p<\de^u_p$ and $0<\eta^u_q<\de^u_q$, with
\beqq
\hat{H}^1_1:=D^s_p\times(p_0-\eta^u_p,p_0+\eta^u_p)\ \mbox{and}\ \hat{H}^2_1:=D^s_q\times(q_0-\eta^u_q,q_0+\eta^u_q),
\eeqq
where
$(p_0-\eta^u_p,p_0+\eta^u_p)\subset D^u_p\setminus\{p\}$ and $(q_0-\eta^u_q,q_0+\eta^u_q)\subset D^u_q\setminus\{q\}$, such that
\beqq
\phi(\tfrac{T}{2}+k_1 T,\hat{H}^1_1)\subset \Pi_q\ \mbox{and}\
\phi(\tfrac{T}{2}+k_1 T,\hat{H}^2_1)\subset \Pi_p.
\eeqq

It follows from the $\ld$-Lemma or the Inclination Lemma \cite[Lemma 7.1]{PalisMelo1982} that there is an integer $k_2\geq0$ such that
\beqq
D^u_q\subset\mbox{Proj}_{W^u(q)}(\phi(\tfrac{T}{2}+(k_1+k_2) T,\hat{H}^1_1)),
\eeqq
\beqq
D^u_p\subset\mbox{Proj}_{W^u(p)}(\phi(\tfrac{T}{2}+(k_1+k_2) T,\hat{H}^2_1)),
\eeqq
where $\mbox{Proj}_{W^u(p)}$ and $\mbox{Proj}_{W^u(q)}$ are the projections onto $W^u(p)$ and $W^u(q)$, respectively, and the assumptions that $\phi(\tfrac{T}{2}+m_pT,\Upsilon_1)$ contains a segment of $W^s_{loc}(q)$ and
$\phi(\tfrac{T}{2}+m_qT,\Upsilon_2)$ contains a segment of $W^s_{loc}(p)$ are used here.
Figure \ref{examplefigure-8} shows an illustrative diagram of the map from $\hat{H}^1_1\subset \Pi_p$ to $\Pi_q$.

\begin{figure}[h]
\begin{center}
\scalebox{0.5 }{ \includegraphics{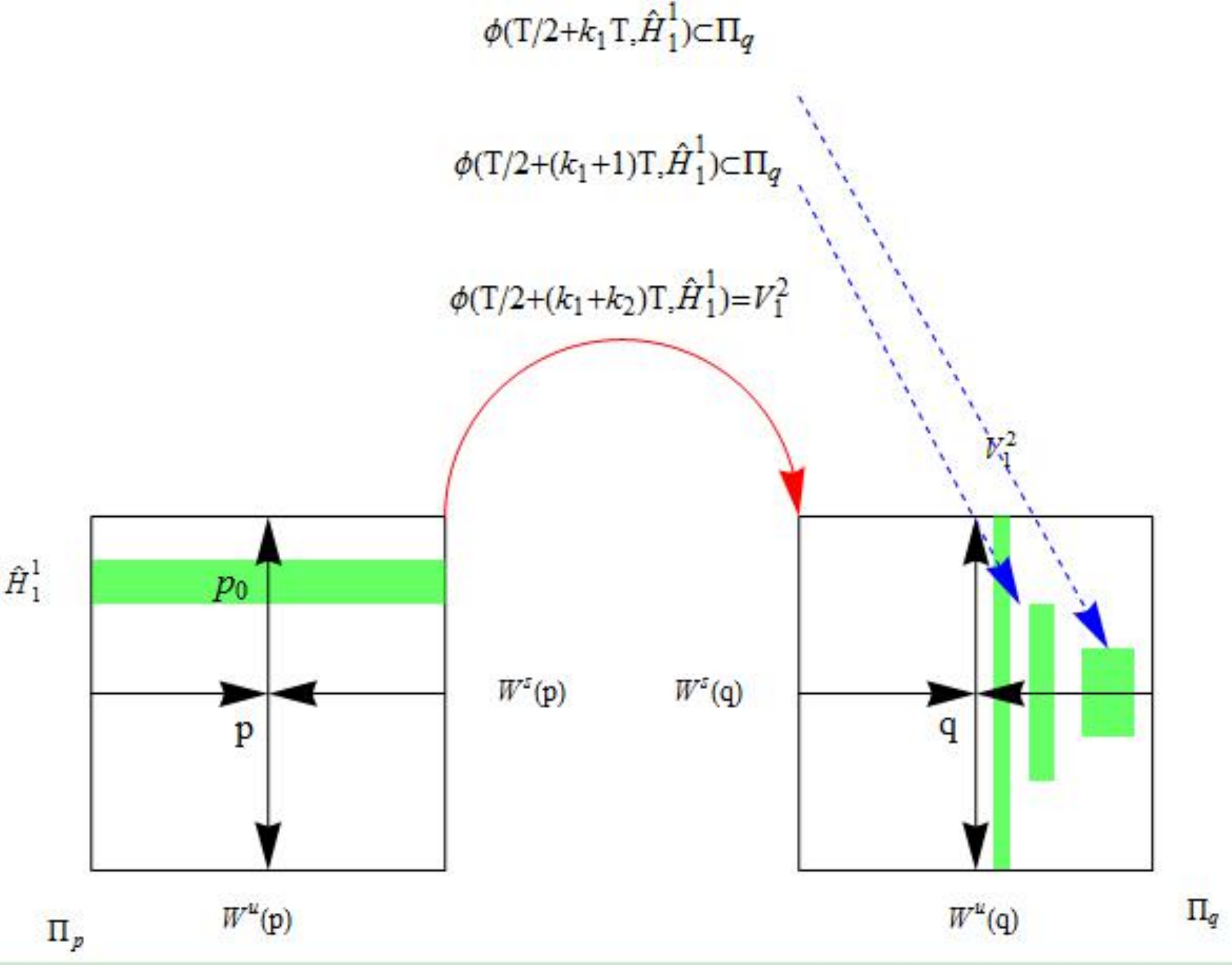}}
\renewcommand{\figure}{Fig.}
\caption{Illustrative diagram of the map from $\hat{H}^1_1\subset \Pi_p$ to $\Pi_q$.
}\label{examplefigure-8}
\end{center}
\end{figure}

{\bf (iii)} Consider the complex dynamics near the horizontal neighborhoods containing the points $p$ and $q$.

Since the periodic orbit is hyperbolic, it follows from the $\ld$-Lemma or Inclination Lemma \cite[Lemma 7.1]{PalisMelo1982} that there exists an integer $k_3\geq0$
such that, for any $k\geq k_3$, there exist positive constants $\ep^u_{p,k}$ and $\ep^u_{q,k}$, denote by
\beqq
H^1_{2,k}:=D^s_p\times(p-\ep^u_{p,k},p+\ep^u_{p,k})\ \mbox{and}\ H^2_{2,k}:=D^s_q\times(q-\ep^u_{q,k},q+\ep^u_{q,k}),
\eeqq
where
$(p-\ep^u_{p,k},p+\ep^u_{p,k})\subset D^u_p$ and $(p_0-\eta^u_{p},p_0+\eta^u_{p})\cap(p-\ep^u_{p,k},p+\ep^u_{p,k})=\emptyset$, and $(q-\ep^u_{q,k},q+\ep^u_{q,k})\subset D^u_q$ and $(q_0-\eta^u_{q},q_0+\eta^u_{q})\cap(q-\ep^u_{q,k},q+\ep^u_{q,k})=\emptyset$, such that
\beqq
D^u_p\subset\mbox{Proj}_{W^u(p)}(\phi(\tfrac{T}{2}+kT,H^2_{2,k})),
\eeqq
\beqq
D^u_q\subset\mbox{Proj}_{W^u(q)}(\phi(\tfrac{T}{2}+kT,H^1_{2,k})).
\eeqq

In the above discussion, one may assume that $\lim_{k\to+\infty}\ep^u_{p,k}=0$ and $\lim_{k\to+\infty}\ep^u_{q,k}=0$. An illustrative diagram is shown in Figure \ref{examplefig-5}. In this figure, in subgraph (a), the region bounded by red lines in $\Pi_p$ is $H^1_{2,k_3}$, the region bounded by green lines in $\Pi_p$ is $H^1_{2,k_3+1}$, the region bounded by blue lines in $\Pi_p$ is $H^1_{2,k_3+2}$; also in subgraph (a), the region bounded by red lines in $\Pi_q$ is $\phi(\tfrac{T}{2}+k_3T,H^1_{2,k_3})$, the region bounded by green lines in $\Pi_q$ is $\phi(\tfrac{T}{2}+(k_3+1)T, H^1_{2,k_3+1})$, the region bounded by blue lines in $\Pi_q$ is $\phi(\tfrac{T}{2}+(k_3+2)T,H^1_{2,k_3+2})$. In subgraph (b), the red rectangle in $\Pi_p$ is $H^1_{2,k_3}$, the green rectangle in $\Pi_p$ is $H^1_{2,k_3+1}$, the blue rectangle in $\Pi_p$ is $H^1_{2,k_3+2}$; also in subgraph (b), the red rectangle in $\Pi_q$ is $\phi(\tfrac{T}{2}+k_3T,H^1_{2,k_3})$, the green rectangle in $\Pi_q$ is $\phi(\tfrac{T}{2}+(k_3+1)T, H^1_{2,k_3+1})$, the blue rectangle in $\Pi_q$ is $\phi(\tfrac{T}{2}+(k_3+2)T,H^1_{2,k_3+2})$.

\begin{figure}[h]
\begin{center}
\subfloat[]{
\scalebox{0.25}{ \includegraphics{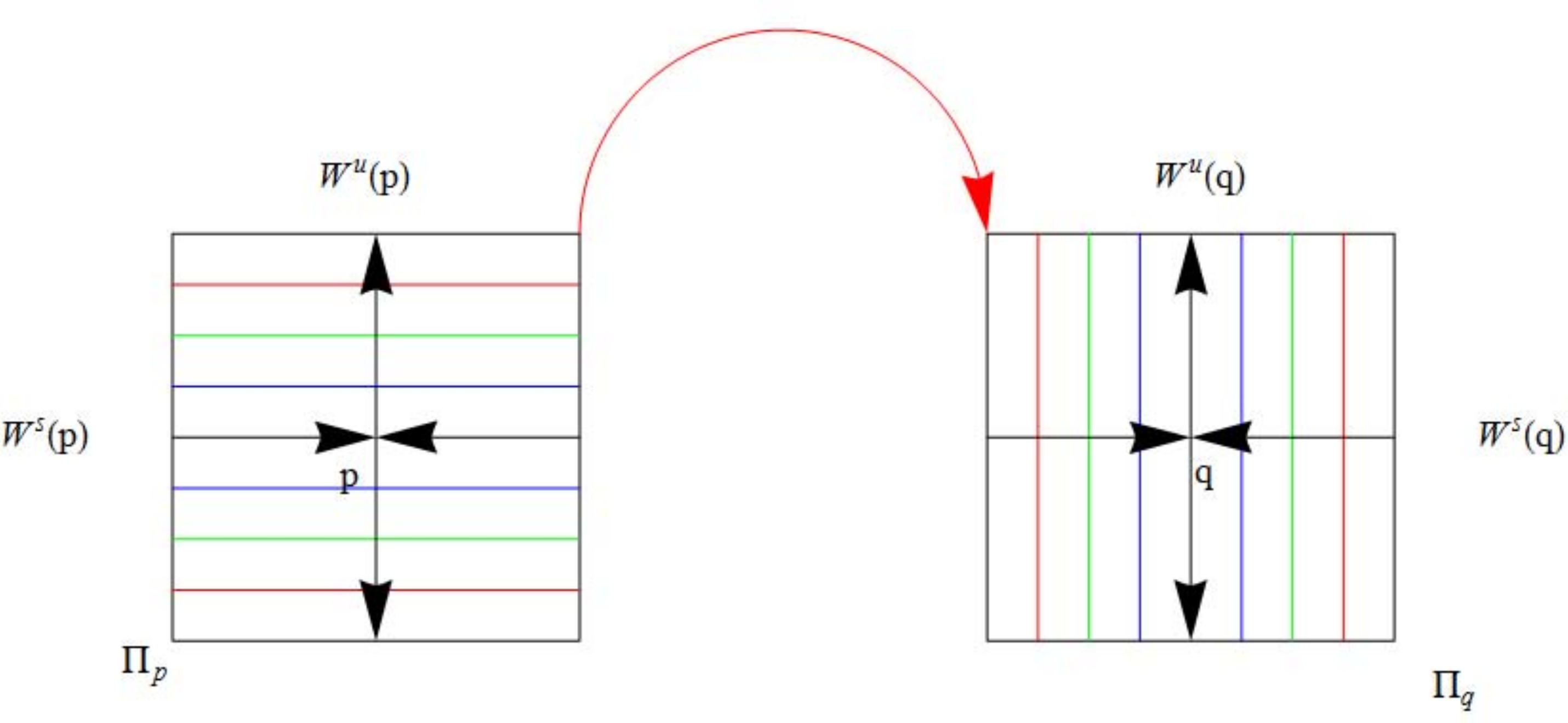}}}
\subfloat[]{
\scalebox{0.25}{ \includegraphics{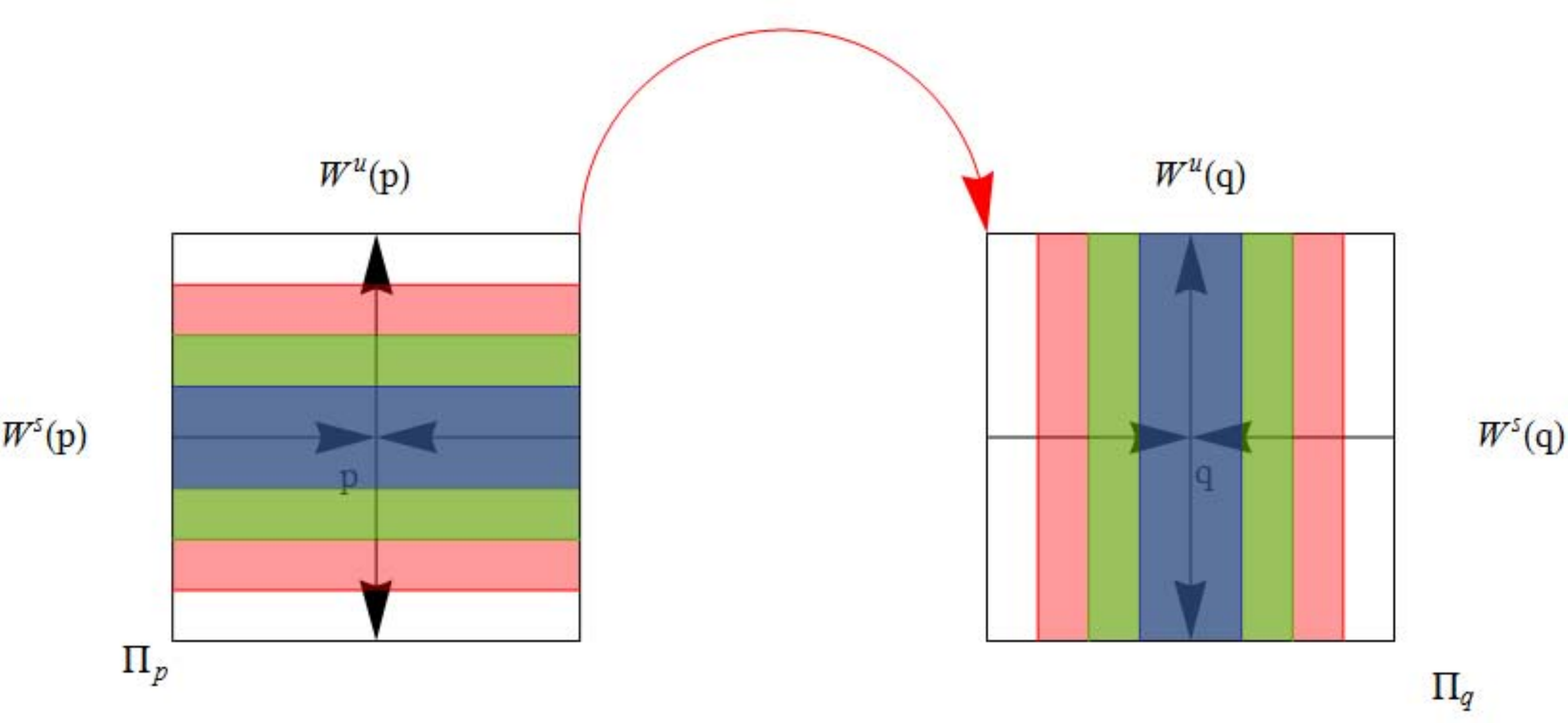}}}
\renewcommand{\figure}{Fig.}
\caption{Illustrative diagram for the complex dynamics near the horizontal neighborhoods containing the points $p$ and $q$. In subgraph (a), the region bounded by red lines in $\Pi_p$ is $H^1_{2,k_3}$, the region bounded by green lines in $\Pi_p$ is $H^1_{2,k_3+1}$, the region bounded by blue lines in $\Pi_p$ is $H^1_{2,k_3+2}$;
also in subgraph (a), the region bounded by red lines in $\Pi_q$ is $\phi(\tfrac{T}{2}+k_3T,H^1_{2,k_3})$, the region bounded by green lines in $\Pi_q$ is $\phi(\tfrac{T}{2}+(k_3+1)T, H^1_{2,k_3+1})$, the region bounded by blue lines in $\Pi_q$ is $\phi(\tfrac{T}{2}+(k_3+2)T,H^1_{2,k_3+2})$.
In subgraph (b), the red rectangle in $\Pi_p$ is $H^1_{2,k_3}$, the green rectangle in $\Pi_p$ is $H^1_{2,k_3+1}$, the blue rectangle in $\Pi_p$ is $H^1_{2,k_3+2}$;
also in subgraph (b), the red rectangle in $\Pi_q$ is $\phi(\tfrac{T}{2}+k_3T,H^1_{2,k_3})$, the green rectangle in $\Pi_q$ is $\phi(\tfrac{T}{2}+(k_3+1)T, H^1_{2,k_3+1})$, the blue rectangle in $\Pi_q$ is $\phi(\tfrac{T}{2}+(k_3+2)T,H^1_{2,k_3+2})$.
}\label{examplefig-5}
\end{center}
\end{figure}

{\bf (iv)} It is now ready to show the existence of a Smale horseshoe in a subshift of finite type with matrix $A$.

Take a sufficiently large $m\geq\max\{k_3,k_1+k_2\}$. Following the above discussions, by modifying some constants one can obtain: $0<\tilde{\eta}^u_p,\hat{\eta}^u_p\leq\eta^u_p$, $0<\tilde{\eta}^u_q,\hat{\eta}^u_q\leq\eta^u_q$, $0<\tilde{\ep}^u_{p,m},\hat{\ep}^u_{p,m}\leq\ep^u_{p,m}$, and
$0<\tilde{\ep}^u_{q,m},\hat{\ep}^u_{q,m}\leq\ep^u_{q,m}$. Set
\beqq
H^1_1:=D^s_p\times(p_0-\tilde{\eta}^u_p,p_0+\hat{\eta}^u_p)\ \mbox{and}\ H^2_1:=D^s_q\times(q_0-\tilde{\eta}^u_q,q_0+\hat{\eta}^u_q),
\eeqq
and
\beqq
H^1_{2}:=D^s_p\times(p-\tilde{\ep}^u_{p,m},p+\hat{\ep}^u_{p,m})\ \mbox{and}\ H^2_{2}:=D^s_q\times(q-\tilde{\ep}^u_{q,m},q+\hat{\ep}^u_{q,m}),
\eeqq
and
\beqq
V^2_1:=\phi(\tfrac{T}{2}+mT,H^1_1)\subset \Pi_q,\
V^2_2:=\phi(\tfrac{T}{2}+mT,H^1_2)\subset\Pi_q,
\eeqq
\beqq
V^1_1:=\phi(\tfrac{T}{2}+mT,H^2_1)\subset\Pi_p,\
V^1_2:=\phi(\tfrac{T}{2}+mT,H^2_2)\subset\Pi_p.
\eeqq
By the $\ld$-lemma or Inclination Lemma in \cite[Lemma 7.1]{PalisMelo1982}, for sufficiently large $m$, $H^i_j$ are $\mu_h$-horizontal strips, $1\leq i,j\leq2$, and $V^i_j$ are $\mu_v$-vertical strips, $1\leq i,j\leq2$, with $0\leq\mu_h\mu_v<1$. This, together with Theorem \ref{chaoshorsesho-1}, proves Theorem \ref{mainresult-1}.

An illustrative diagram for the map $\phi(\tfrac{T}{2}+mT,\cdot)$, which generates a Smale horseshoe in a subshift of finite type with matrix $A$, is shown in Figure \ref{examplefigure-7}.

\begin{figure}[h]
\begin{center}
\scalebox{0.5 }{ \includegraphics{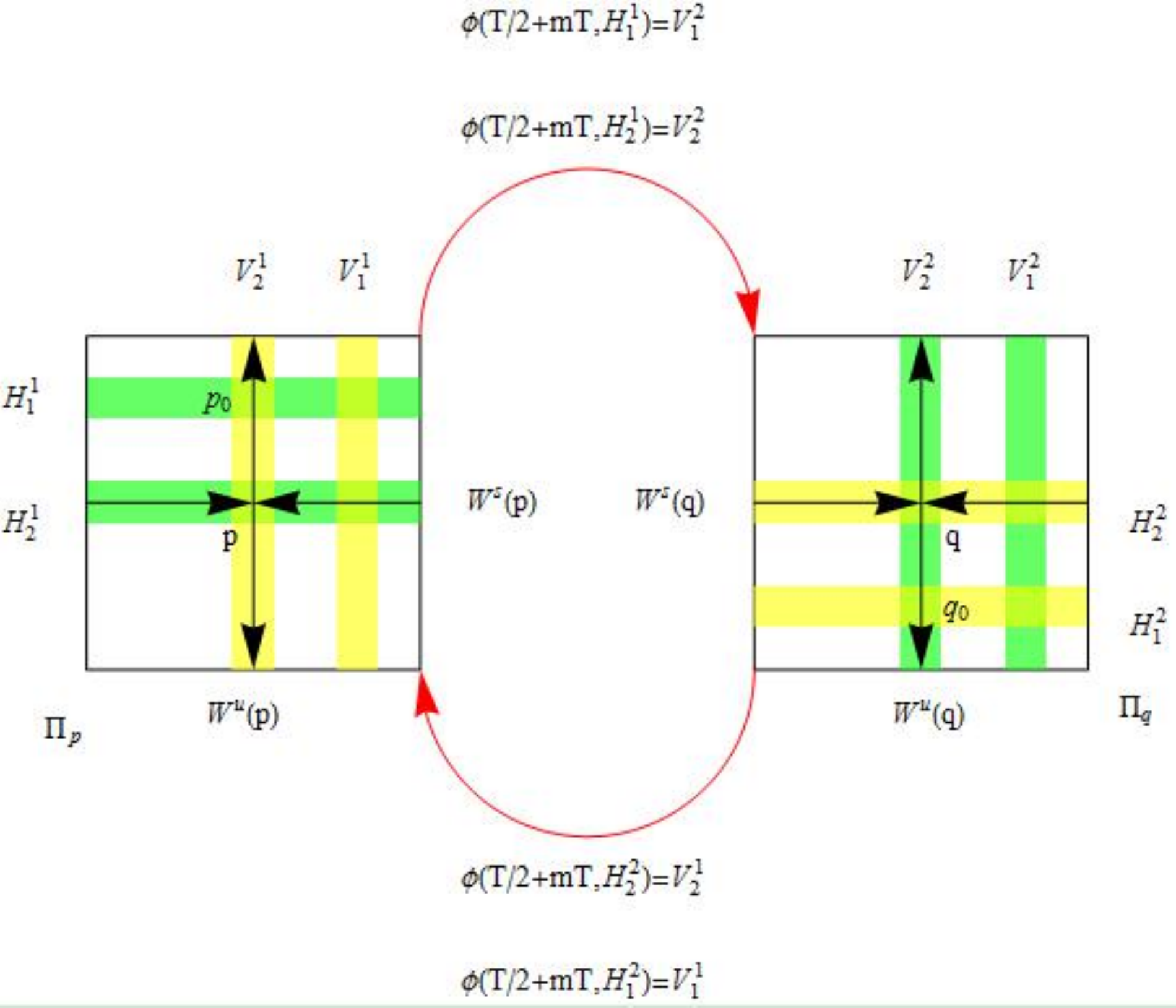}}
\renewcommand{\figure}{Fig.}
\caption{Illustrative diagram for the map $\phi(\tfrac{T}{2}+mT,\cdot)$, which generates a Smale horseshoe in a subshift of finite type with matrix $A$.
}\label{examplefigure-7}
\end{center}
\end{figure}

 In the above discussions, different assumptions bring various types of Smale horseshoes with respect to the Poincar\'{e} maps. An interesting question is it possible to obtain a classification of the continuous systems depending on the characterization of the chaotic dynamics by the existence of different types of Smale horseshoes?
\bigskip

Similar results could be obtained if there exist several periodic orbits, which might be used in the explanation of the complex dynamics of the multiscroll attractors \cite{LuChen2006}.

For simplicity, consider the situation with only two periodic orbits.

\begin{theorem}\label{mainresult-1}
Consider an ordinary differential equation, $\dot{x}=\Phi(x)$, $x\in\mathbb{R}^3$, where $\Phi:\mathbb{R}^3\to\mathbb{R}^3$ is differentiable.
Assume that
\begin{itemize}
\item there exist two hyperbolic periodic orbits of the same period $T>0$, denoted by $\Ga_1$ and $\Ga_2$ respectively, and points $p_i,q_i\in \Ga_i$, $i=1,2$, satisfying $\phi(\tfrac{T}{2},p_i)=q_i$ and $\phi(\tfrac{T}{2},q_i)=p_i$, $i=1,2$;
\item there are an open subset $\Upsilon_1\subset W^u_{loc}(p_1)\times W^s_{loc}(p_1)$ containing a line segment of $W^u_{loc}(p_1)$,  an open subset $\Upsilon_2\subset W^u_{loc}(q_2)\times W^s_{loc}(q_2)$ containing a line segment of $W^u_{loc}(q_2)$, and two positive integers $m_{p_1}$ and $m_{q_2}$ such that $\phi(\tfrac{T}{2}+m_{p_1}T,\Upsilon_1)$ contains a segment of $W^s_{loc}(p_2)$ with $\phi(\tfrac{T}{2}+m_{p_1}T,\Upsilon_1)\subset W^u_{loc}(p_2)\times W^s_{loc}(p_2)$, and $\phi(\tfrac{T}{2}+m_{q_2}T,\Upsilon_2)$ contains a segment of $W^s_{loc}(q_1)$ with $\phi(\tfrac{T}{2}+m_{q_2}T,\Upsilon_2)\subset W^u_{loc}(q_1)\times W^s_{loc}(q_1)$.
\end{itemize}

Then, there exist a positive integer $m$ and an invariant set $\Ld\subset\mathbb{R}^3$ such that the following relations hold:
\begin{center}
\begin{tikzpicture}
    \node (E) at (0,0) {$\Ld$ };
    \node[right=of E] (F) at (4,0){$\Ld $};
    \node[below=of F] (A) {$\sum_8(B)$};
    \node[below=of E] (Asubt) {$\sum_8(B)$};
   \draw[->] (E)--(F) node [midway,above] {$\phi(\tfrac{T}{2}+mT,\cdot)$};
    \draw[->] (F)--(A) node [midway,right] {$\Psi$}
                node [midway,left] {};
    \draw[->] (Asubt)--(A) node [midway,below] {$\si$}
                node [midway,above] {};
    \draw[->] (E)--(Asubt) node [midway,left] {$\Psi$};
\end{tikzpicture}
\end{center}
where $\Psi$ is a homeomorphism from $\Ld$ to $\sum_8(B)$, which is a topological conjugacy, and $B$ is a transition matrix:
$$\left(
  \begin{array}{cccccccc}
    0 & 0 & 1 & 1 & 1 & 1 & 0 & 0 \\
    0 & 0 & 1 & 1 & 1 & 1 & 0 & 0 \\
    1 & 1 & 0 & 0 & 0 & 0 & 0 & 0 \\
    1 & 1 & 0 & 0 & 0 & 0 & 0 & 0 \\
    0 & 0 & 0 & 0 & 0 & 0 & 1 & 1 \\
    0 & 0 & 0 & 0 & 0 & 0 & 1 & 1 \\
   0 & 0 & 1 & 1 & 1 & 1 & 0 & 0 \\
    0 & 0 & 1 & 1 & 1 & 1 & 0 & 0 \\
  \end{array}
\right).$$
\end{theorem}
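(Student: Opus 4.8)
The plan is to run the same four-step argument used for the single periodic orbit version proved above, now on the four cross-sections attached to $p_1,q_1,p_2,q_2$, and to close with a Conley--Moser type topological criterion for the transition matrix $B$ in place of the criterion for $A$ in Theorem~\ref{chaoshorsesho-1}. Since the text already notes that Theorem~\ref{chaoshorsesho-1} extends to an arbitrary transition matrix, the conjugacy $\Psi:\Ld\to\sum_8(B)$ will follow once horizontal and vertical strips realizing the admissibility pattern of $B$ are produced.

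First I would fix hyperbolic coordinates. Both $\Ga_1$ and $\Ga_2$ are hyperbolic, so at each of the four points the tangent space splits as $\mathbb{E}^s\oplus\mathbb{E}^u\oplus\mathrm{span}\,\Phi$, and I take cross-sections $\Pi_{p_1},\Pi_{q_1},\Pi_{p_2},\Pi_{q_2}$ realized as products $D^s\times D^u$ of local stable and unstable disks, exactly as in Step~2(i). On the diagonal the map $\phi(\tfrac{T}{2}+mT,\cdot)$ sends $p_i\mapsto q_i$ and $q_i\mapsto p_i$, while the return map $g=\phi(T,\cdot)$ fixes each point as a saddle. Reading off the connection graph, the half-period flow gives the within-orbit edges $\Pi_{p_1}\leftrightarrow\Pi_{q_1}$ and $\Pi_{p_2}\leftrightarrow\Pi_{q_2}$, while the hypotheses on $\Upsilon_1$ and $\Upsilon_2$ give the two between-orbit heteroclinic edges $\Pi_{p_1}\to\Pi_{p_2}$ and $\Pi_{q_2}\to\Pi_{q_1}$. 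Attaching the symbol pairs $\{1,2\},\{3,4\},\{5,6\},\{7,8\}$ to $\Pi_{p_1},\Pi_{q_1},\Pi_{p_2},\Pi_{q_2}$ turns this graph into exactly $B$: the two diagonal $4\times4$ blocks are copies of the single-orbit matrix $A$ generated by the half-period dynamics of $\Ga_1$ and $\Ga_2$, and the heteroclinics contribute the off-diagonal full blocks $\{1,2\}\to\{5,6\}$ and $\{7,8\}\to\{3,4\}$.

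Next I would build the strips and apply the Inclination Lemma. On each cross-section I place two $\mu_h$-horizontal strips along the unstable interval $D^u$ and define the corresponding $\mu_v$-vertical strips as their images under $\phi(\tfrac{T}{2}+mT,\cdot)$, following Step~2(ii)--(iv). For $m$ large the $\lambda$-Lemma \cite[Lemma~7.1]{PalisMelo1982} forces each image to stretch fully across the unstable direction of every target section it reaches, hence to cross both horizontal strips there, while the uniform stable contraction yields $0\le\mu_h\mu_v<1$ and the width estimates $d(\widetilde{H})\le\nu_h\, d(H)$, $d(\widetilde{V})\le\nu_v\, d(V)$ of Assumption~2. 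Feeding these data into the $B$-version of Theorem~\ref{chaoshorsesho-1} produces the invariant Cantor set $\Ld$ and the homeomorphism $\Psi$ conjugating $\phi(\tfrac{T}{2}+mT,\cdot)$ to $\si$, which is the assertion.

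The main obstacle is the bookkeeping that pins the admissibility down to exactly $B$. Unlike the single-orbit case, where both outgoing routes from a section returned to the same section, the two routes out of $\Pi_{p_1}$ (periodic to $q_1$, heteroclinic to $p_2$) land on \emph{different} sections; the rows of $B$ with four ones (rows $1,2,7,8$) require that the image of a single horizontal strip be distributed across two target sections and cross both horizontal strips on each. I would therefore choose the unstable extent of the strips on $\Pi_{p_1}$ and $\Pi_{q_2}$ to straddle both the periodic site and the heteroclinic departure site, and verify via the $\lambda$-Lemma that each resulting piece is a full-crossing vertical strip on its respective target, with no extra crossings that would enlarge the matrix. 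Checking that the two strips placed on $\Pi_{q_1}$ and on $\Pi_{p_2}$ (the sections with a single outgoing route) both map across both strips of their target, so that rows $3,4,5,6$ carry exactly two ones, is the remaining delicate point. Once the strip geometry is fixed, the extension of the nested-strip argument of \cite[Theorem~25.1.5]{Wiggins2003} to the admissibility constraints of $B$ is routine.
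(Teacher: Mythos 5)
Your plan follows the only route the paper itself offers --- it states this two-orbit theorem with no proof beyond the phrase ``similar results could be obtained'' --- and you correctly isolate the crux: the rows of $B$ with four ones force the image of a single horizontal strip to cross, fully, strips lying on two \emph{different} cross-sections. But the fix you propose breaks the construction. If the two horizontal strips on $\Pi_{p_1}$ are products $D^s_{p_1}\times I_1$ and $D^s_{p_1}\times I_2$ whose unstable extents both ``straddle the periodic site and the heteroclinic departure site'', then $I_1$ and $I_2$ each contain the unstable coordinates of both $p_1$ and of the departure segment, hence they overlap along the whole interval between those sites, and the two strips are not disjoint. Disjointness is not a technicality: it is what makes the coding map in Theorem \ref{chaoshorsesho-1} (and in any Conley--Moser scheme) well defined. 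The same obstruction, which you flag as ``the remaining delicate point'' but do not resolve, is fatal on the single-route sections: the only mechanism the hypotheses provide for a strip on $\Pi_{q_1}$ to stretch fully across $\Pi_{p_1}$ is the $\lambda$-Lemma, which requires the strip to meet $W^s_{loc}(q_1)=D^s_{q_1}\times\{0\}$, and two disjoint product strips $D^s_{q_1}\times I_1$, $D^s_{q_1}\times I_2$ cannot both contain points with unstable coordinate $0$. So rows $3,4$ (and likewise $5,6$) of $B$ cannot both be realized either.

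What your construction actually certifies is six symbols, not eight: two strips on $\Pi_{p_1}$ (one at $p_1$, one at the departure segment), two on $\Pi_{q_2}$, and one each on $\Pi_{q_1}$ and $\Pi_{p_2}$, each strip's image guaranteed to cross fully only the strips of the one section its route leads to. Ordering the symbols (dep-$p_1$, $p_1$, $q_1$, $p_2$, dep-$q_2$, $q_2$), the transition matrix is
\begin{equation*}
C=\begin{pmatrix}
0&0&0&1&0&0\\
0&0&1&0&0&0\\
1&1&0&0&0&0\\
0&0&0&0&1&1\\
0&0&1&0&0&0\\
0&0&0&1&0&0
\end{pmatrix},
\end{equation*}
whose spectral radius is $\sqrt{2}$; the invariant set you build is conjugate to $\sum_6(C)$, with entropy $\tfrac{1}{2}\log 2$, and therefore cannot be conjugate to $\sum_8(B)$, whose entropy is $\log(2\sqrt{2})=\tfrac{3}{2}\log 2$. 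The discrepancy is structural, not bookkeeping: in the one-orbit theorem both routes out of $\Pi_p$ land on the \emph{same} section $\Pi_q$, so every strip crosses every strip of its unique target and the full-block matrix $A$ appears; here the two routes out of $\Pi_{p_1}$ land on different sections. To genuinely obtain $\sum_8(B)$ one would need non-product (tilted) strips --- on $\Pi_{p_1}$ each strip crossing transversally both $W^s_{loc}(p_1)$ and the $\phi(\tfrac{T}{2}+m_{p_1}T,\cdot)$-preimage of the stable segment of $W^s_{loc}(p_2)$, and on $\Pi_{q_1}$, $\Pi_{p_2}$ two disjoint tilted strips each meeting the local stable manifold --- together with a re-verification of Assumptions 1 and 2 for such strips; neither your proposal nor the paper carries this out. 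As written, your argument proves the theorem only with $\sum_6(C)$ in place of $\sum_8(B)$.
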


\section*{Acknowledgements}

The authors would like to thank Prof. Qigui Yang and Dr. Yousu Huang for careful reading our manuscript and pointing out several mistakes, improving our presentation greatly.

This research was supported by the National Natural Science Foundation of China (No. 11701328).

\baselineskip=10pt


\begin{thebibliography}{10}


\bibitem{Bertozzi1988}
A.~L. Bertozzi.
\newblock Heteroclinic orbits and chaotic dynamics in planar fluid flows.
\newblock {\em SIAM J. Math. Anal.}, 19:1271--1294, 1988.

\bibitem{BirmanWilliams1983}
J.~S. Birman and R.~F. Williams.
\newblock Knotted periodic orbits in dynamical systems-1: {L}orenz's equations.
\newblock {\em Topology}, 22:47--82, 1983.

\bibitem{Chen2017}
G.~Chen, N.~Kuznetsov, G.~Leonov, and T.~Mokaev.
\newblock Hidden attractors on one path: {G}lukhovsky-{D}olzhansky, {L}orenz, and {R}abinovich systems.
\newblock {\em Int. J. Bifurcation Chaos}, 27:1750115 (9 pages), 2017.

\bibitem{Chen1999}
G.~Chen and T.~Ueta.
\newblock Yet another chaotic attractor.
\newblock {\em Int. J. Bifurcation Chaos}, 9:1465--1466, 1999.

\bibitem{DudkJafaKapKuznLeonPras2016}
D.~Dudkowski, S. Jafari, T. Kapitaniak, N. Kuznetsov, G. Leonov, and A. Prasad. \newblock Hidden attractors in dynamical systems. \newblock {\em Physics Reports}, 637: 1--50, 2016.


\bibitem{Ecke2013}
M.~Eckert. \newblock Arnold Sommerfeld: Science, Life and Turbulent Times 1868--1951, \newblock{\em Springer}, 2013.

\bibitem{JafariPhamMoghtadaeiKingni2016}
S.~Jafari, V.-T. Pham, S.~Moghtadaei, and S.~Kingni.
\newblock The relationship between chaotic maps and some chaotic systems with hidden attractors.
\newblock {\em Int. J. Bifurcation Chaos}, 26:1650211 (8 pages), 2016.

\bibitem{JafariSprottNazarimehr2015}
S.~Jafari, J.~Sprott, and F.~Nazarimehr.
\newblock Recent new examples of hidden attractors.
\newblock {\em Eur. Phys. J. Special Topics}, 224:1469--1476, 2015.

\bibitem{JafariSprottPhamLi2016}
J.~Jafari, J.~Sprott, V.-T.~Pham, and C.~B.~Li.
\newblock Simple chaotic 3D flows with surfaces of equilibria.
\newblock {\em Nonlinear Dynamics}, 86: 1349-1358, 2016.

\bibitem{JiangLiuWeiZhang2016}
H.~Jiang, Y.~Liu, Z.~Wei, and L.~Zhang.
\newblock Hidden chaotic attractors in a class of two-dimensional maps.
\newblock {\em Nonlinear Dyn.}, 85:2719--2727, 2016.

\bibitem{LeoKuz2013}
G.~Leonov and N.~Kuznetsov.
\newblock Hidden attractors in dynamical systems. {F}rom hidden oscillations in {H}ilbert-{K}olmogorov,
{A}izerman, and {K}alman problems to hidden chaotic attractor in {C}hua circuits.
\newblock {\em Int. J. Bifurcation Chaos}, 23:1330002 (69 pages), 2013.

\bibitem{LeoKuz2015}
G.~Leonov and N.~Kuznetsov.
\newblock On differences and similarities in the analysis of {L}orenz, {C}hen, and {L}u systems.
\newblock {\em Appl. Math. Comput.}, 256:334--343, 2015.

\bibitem{LeonKuznVaga2011}
G.~Leonov, N.~Kuznetsov, and V.~Vagaitsev.
\newblock Localization of hidden {C}hua's attractors.
\newblock {\em Phys. Lett. A}, 375:2230--2233, 2011.

\bibitem{Lorenz}
E.~Lorenz.
\newblock Deterministic non-periodic flow.
\newblock {\em J. Atmos. Sci.}, 20:130--141, 1963.

\bibitem{LuChen2006}
 J. ~L\"{u} and G. ~Chen. \newblock Generating multiscroll chaotic
attractors: Theories, methods and applications. \newblock{\em Int.
J. Bifurcation and Chaos}, 16: 775--858, 2006.


\bibitem{Melnikov1963}
V.~K. Melnikov.
\newblock On the stability of the center for time periodic perturbations.
\newblock {\em Trans. Moscow Math. Soc.}, 12:1--57, 1963.

\bibitem{MihaVeggWouwNijm2004}
N. Mihajlovic, A. A. van Veggel,
N. van de Wouw, and H. Nijmeijer.
\newblock Analysis of frictin induced limit cycling in an experimental drill string system.
\newblock {\em J. Dyn. Syst. Meas. Control}, 126: 709--720, 2004.

\bibitem{MolaJafaSproGolp2013}
M.~Molaie, S~Jafari, J.~Sprott, and M.~Golpayegani.
\newblock Simple chaotic flows with one stable equilibrium.
\newblock {\em Int. J. Bifurcation Chaos}, 23:1350188 (7 pages), 2013.

\bibitem{Moser1973}
J.~Moser.
\newblock {\em Stable and Random Motions in Dynamical Systems}.
\newblock Princeton University Press, Princeton, 1973.

\bibitem{PalisMelo1982}
Jr. J.~Palis and W.~de~Melo.
\newblock {\em Geometric Theory of Dynamical Systems, An Introduction}.
\newblock Springer-Verlag, New York, translated by {A}. {K}. {M}anning edition, 1982.

\bibitem{PikoRabiTrak1978}
A. S. Pikovski, M. I. Rabinovich, V. Y. Trakhtengerts. \newblock Onset of stochasticity in decay confinement of parametric instability. \newblock{\em Journal of Experimental and Theoretical Physics}, 47: 715--719, 1978.

\bibitem{Rabi1978}
M. Rabinovich. \newblock Stochastic self-oscillations and turbulence. \newblock {\em Usp. Fiz. Nauk}. 125: 123--168, 1978.

\bibitem{Robinson}
C.~Robinson.
\newblock {\em Dynamical Systems: Stability, Symbolic Dynamics and Chaos}.
\newblock CRC Press, Florida, 1999.

\bibitem{Shilnikov1965}
L.~P. Shilnikov.
\newblock A case of the existence of a denumerable set of periodic motions.
\newblock {\em Sov. Math. Dokl.}, 6:163--166, 1965.

\bibitem{Shilnikov1967}
L.~P. Shilnikov.
\newblock The existence of a denumerable set of periodic motions in four-dimensional space in an extended neighborhood of a saddle-focus.
\newblock {\em Sov. Math. Dokl.}, 8:54--58, 1967.

\bibitem{Shilnikov1970}
L.~P. Shilnikov.
\newblock A contribution to the problem of the structure of an extended neighborhood of a rough equilibrium state of saddle-focus type.
\newblock {\em Math. USSR Sbornik}, 10:91--102, 1970.

\bibitem{Smale1963}
S.~Smale.
\newblock {\em Diffeomorphisms with many periodic points}, chapter Diff. Combin. Topology, pages 63--86.
\newblock Princeton University Press, Princeton, 1963.

\bibitem{Somm1902}
A.~Sommerfeld. \newblock Beitrage zum dynamischen ausbau der festigkeitslehre.\newblock{\em Z. Verein. deutscher Ingr.}, 46:391--394, 1902.


\bibitem{Sprott1994}
J.~Sprott.
\newblock Some simple chaotic flows.
\newblock {\em Phys. Rev. E}, 50:R647--R650, 1994.

\bibitem{WangChen2013}
X.~Wang and G.~Chen.
\newblock Constructing a chaotic system with any number of equilibria.
\newblock {\em Nonlinear Dyn.}, 71:429--436, 2013.

\bibitem{Wiggins2003}
S.~Wiggins.
\newblock {\em Introduction to Applied Nonlinear Dynamical Systems and Chaos.} 2nd ed.
\newblock Springer-Verlag, New York, 2003.

\bibitem{Williams1984}
R.~F. Williams.
\newblock Lorenz knots are prime.
\newblock {\em Ergod. Th. $\&$ Dynam. Sys.}, 4:147--163, 1984.

\bibitem{ZhangChen2018}
X.~Zhang and G.~Chen.
\newblock Polynomial maps with hidden complex dynamics.
\newblock {\em Discrete and Continuous Dynamical Systems-B},
24:2941--2954, 2019.

\end{thebibliography}
\end{document}